\newtheorem{theorem}{Theorem}[section]
\newtheorem{proposition}[theorem]{Proposition}
\newtheorem{lemma}[theorem]{Lemma}
\newtheorem{definition}[theorem]{Definition}
\newtheorem{remark}[theorem]{Remark}
\newtheorem{nota}[theorem]{Notation}
\newtheorem{example}[theorem]{Example}
\numberwithin{equation}{section} 
\newcommand{\cqfd}{\hfill{\small $\Box$}} 
 \newenvironment{proof}[1][]{{\bf Proof #1 : }}{\hfill \cqfd} 
\newcommand{\To}{\longrightarrow}
\newcommand{\id}{\mbox{id}}
\newcommand{\gr}{\mathscr{G}}
\newcommand{\go}{\mathscr{G} ^{(0)}}
\newcommand{\hr}{\mathscr{H}}
\newcommand{\ho}{\mathscr{H} ^{(0)}}
\newcommand{\Sp}{\mathscr{S}_{p}}
\newcommand{\F}{\mathscr{F}}
\newcommand{\Qp}{{\mathbb Q}_p}
\newcommand{\Sol}{\mathscr{S}}
\newcommand\tgt[1]{{}^{T}\kern-1pt #1}
\newcommand\adi[1]{{}^{ad}\kern-1pt #1}
\newcommand{\im}{\mathop{\mathrm{im}}\nolimits}
  \def\RR{{\mathrm{R}}}
 \def\NN{{\mathbb{N}}} 
 \def\QQ{{\mathbb{Q}}} \def\RR{{\mathbb{R}}}
 \def\ZZ{{\mathbb{Z}}}
\def\cA{{\mathcal{A}}}
  \def\cU{{\mathcal{U}}}
\def\cV{{\mathcal{V}}}  
  \title{Groupoids, equivalence bibundles and bimodules for noncommutative solenoids}
  \author{Paulo Carrillo Rouse and Laurent Guillaume}
\begin{document}

\maketitle

\bigskip
\everymath={\displaystyle}

\begin{abstract}
\noindent 
Let $p$ be a prime number and $\Sp$ the $p$-solenoid. For $\alpha\in \mathbb{R}\times \mathbb{Q}_p$ we consider in this paper a naturally associated action groupoid $S_\alpha:=\ZZ [1/p]\ltimes_\alpha \Sp \rightrightarrows \Sp$ whose $C^*-$algebra is a model for the noncommutative solenoid $\cA_\alpha^\mathscr{S}$ studied by Latremoli\`ere and Packer. Following the geometric ideas of Connes and Rieffel to describe the Morita equivalences of noncommutative torus using the Kronecker foliation on the torus, we give an explicit description of the geometric/topologic equivalence bibundle for groupoids $S_\alpha$ and $S_\beta$ whenever $\alpha,\beta\in \RR\times \QQ_p$ are in the same orbit of the $GL_2(\ZZ[1/p])$ action by linear fractional transformations. As a corollary, for $\alpha,\beta\in \RR\times \QQ_p$ as above we get an explicit description of the imprimitivity bimodules for the associated noncommutative solenoids.
\end{abstract}

\tableofcontents

\section*{Introduction}

One of the first and most studied examples of noncommutative spaces in noncommuative geometry is the so called noncommutative torus $A_\theta$ for non rational $\theta\in \mathbb{R}$ (the rational case can be as well defined but it is less interesting as a "noncommutative" space). The $C^*$-algebra $A_\theta$ admits many different descriptions (up to isomorphism and up to Morita as we will recall in the text). For example it can be realized as the $C^*-$algebra of the transformation groupoid associated to the action of $\ZZ$ on $S^1$ by rotations by $\theta$, we denote by $\mathbb{T}_\theta:=\ZZ\ltimes_\theta S^1 \rightrightarrows S^1$ this Lie groupoid.

One of main notions of equivalence for $C^*$-algebras in noncommutative geometry is the one introduced by Rieffel and called strong Morita equivalence. In \cite{Rieffel}, Rieffel showed that $A_\theta$ and $A_{\theta'}$ are strongly Morita equivalent if and only if $\theta$ and $\theta'$ are in the same orbit of the $GL_2(\ZZ)$ action on irrational numbers by linear fractional transformations. Later on, with Connes (see Connes book \cite{Concg} section $II.8.\beta$), they gave an even more geometric description of these equivalences for noncommutative torus by exploring the restriction to complete transversals on the holonomy groupoid associated to the so called Kronecker foliation of the torus by irrational slopes $\theta$. This geometric approach allows them in particular to give a precise description of the Morita equivalence bimodules.

For groupoids (Lie or locally compact) there is a topological/geometrical notion of Morita equivalence given by the so called equivalence bibundles. This very studied equivalence in geometry/topology allows to properly formalize the study of geometrical/topological stacks. Now, given two Morita equivalent groupoids it is known that the corresponding $C^*$-algebras are strongly Morita equivalent (Renault's equivalence theorem, see for instance 2.70 in \cite{Williams}). In fact, whenever an appropriate/explicit description of an equivalence bibundle is given there is a very natural construction of the associated Morita equivalence bimodule. 

For $\theta,\theta'$ irrationals in the same $GL_2(\ZZ)$-orbit, the Connes-Rieffel approach gives a good glimpse of the explicit description of an equivalence bibundle. By explicit we mean a precise bibundle space, explicit left and right moment maps and explicit left and right groupoid actions. The motivation for the content of the present article was to give an explicit description for  the equivalence bibundles for $\mathbb{T}_\theta$ and $\mathbb{T}_{\theta'}$ for $\theta,\theta'$ irrationals in the same $GL_2(\ZZ)$-orbit. For noncommutative torus, this expected folklore result was for us the starting point to go beyond on the exploration for the case of noncommutative solenoids. Let us explain with more detail the content of this work.

Let $p$ be a prime number and $\Sp$ the associated solenoid. Given $\alpha\in \RR\times \QQ_p $ there is a transformation groupoid 
\begin{equation}
S_\alpha:=\ZZ[1/p]\ltimes_\alpha \Sp \rightrightarrows \Sp
\end{equation}
whose $C^*$-algebra $C^*(S_\alpha)$ is a model (as justified in theorem \ref{thmncsolenoidgrpd}) for the so called "Noncommutative solenoid associated to $\alpha$". These groupoids $S_\alpha$ are called here solenoidal groupoids. Now, noncommutative solenoids have been extensively studied by Latremoli\`ere and Packer \cite{LP1,LP2,LP3}, in their works they have accomplished several interesting questions, for example the classifications of twistings, the computation of their $K$-theory groups and the construction of explicit equivalence bimodules in some cases, just for mention some of them (see also Lu's work \cite{Lu}). Also, very promising applications to the so called Gromov-Hausdorff and spectral propinquity have been explored, see for instance \cite{LP4,FLP}.

Our main theorem (statement \ref{maintheorem} below) gives a very precise and explicit description of the equivalence bibundle between two solenoidal groupoids $S_\alpha$ and $S_\beta$ for $\alpha,\beta\in \RR\times \QQ_p$ in the same orbit of the $GL_2(\ZZ[1/p])$ action by linear fractional transformations.

As a corollary we obtain explicit equivalence bimodules between the respective noncommutative solenoids $C^*(S_\alpha)$ and $C^*(S_\beta)$. These equivalence bimodules recover, unify and generalize, with a geometric groupoid approach, all previously known cases in \cite{LP2,LP3,Lu}.

\section{Locally compact groupoids and bibundle equivalences}

In this section we put some basic preliminaries on groupoids and their equivalences. For more details see the recent book of Williams \cite{Williams}.

Let us recall what a groupoid is:

\begin{definition}
A $\it{groupoid}$ consists of the following data:
two sets $\gr$ and $\go$, and maps
\begin{itemize}
\item[(1)]  $s,r:\gr \rightarrow \go$ 
called the source map and target map respectively,
\item[(2)]  $m:\gr^{(2)}\rightarrow \gr$ called the product map 
(where $\gr^{(2)}=\{ (\gamma,\eta)\in \gr \times \gr : s(\gamma)=r(\eta)\}$),
\end{itemize}
together with  two additional  maps, $u:\go \rightarrow \gr$ (the unit map) and 
$i:\gr \rightarrow \gr$ (the inverse map),
such that, if we denote $m(\gamma,\eta)=\gamma \cdot \eta$, $u(x)=x$ and 
$i(\gamma)=\gamma^{-1}$, we have 
\begin{itemize}
\item[(i).]$r(\gamma \cdot \eta) =r(\gamma)$ and $s(\gamma \cdot \eta) =s(\eta)$.
\item[(ii).]$\gamma \cdot (\eta \cdot \delta)=(\gamma \cdot \eta )\cdot \delta$, 
$\forall \gamma,\eta,\delta \in \gr$ whenever this makes sense.
\item[(iii).]$\gamma \cdot x = \gamma$ and $x\cdot \eta =\eta$, $\forall
  \gamma,\eta \in \gr$ with $s(\gamma)=x$ and $r(\eta)=x$.
\item[(iv).]$\gamma \cdot \gamma^{-1} =u(r(\gamma))$ and 
$\gamma^{-1} \cdot \gamma =u(s(\gamma))$, $\forall \gamma \in \gr$.
\end{itemize}
For simplicity, we denote a groupoid by $\gr \rightrightarrows \go $. A strict morphism $f$ from
a  groupoid   $\hr \rightrightarrows \ho $  to a groupoid   $\gr \rightrightarrows \go $ is  given
by  maps in 
\[
\xymatrix{
\hr \ar@<.5ex>[d]\ar@<-.5ex>[d] \ar[r]^f& \gr \ar@<.5ex>[d]\ar@<-.5ex>[d]\\
\ho\ar[r]^f&\go
}
\]
which preserve the groupoid structure, i.e.,  $f$ commutes with the source, target, unit, inverse  maps, and respects the groupoid product  in the sense that $f(h_1\cdot h_2) = f (h_1) \cdot f(h_2)$ for any $(h_1, h_2) \in \hr^{(2)}$.

\end{definition}

In this paper we will only deal with locally compact groupoids, that is, 
a groupoid in which $\gr$ and $\go$ are locally compact Hausdorff spaces, and $s,r,m,u$ are continuous maps. For two subsets $U$ and $V$ of $\go$,
 we use the notation
$\gr_{U}^{V}$ for the subset 
\[
\{ \gamma \in \gr : s(\gamma) \in U,\, 
r(\gamma)\in V\} .
\]

\subsection*{ Groupoid equivalence bibundles:}

\begin{definition}
Let $\gr \rightrightarrows \go$ and  
$\hr \rightrightarrows \ho$ be two locally compact groupoids with open range maps.

  Let $\gr \rightrightarrows \go$ and  
$\hr \rightrightarrows \ho$ be two locally compact groupoids.  A $\gr$-$\hr$ bibundle equivalence
is a left $\gr$-bundle over $\ho$
which is also a right $\hr$-bundle over $\go$,  formally denoted by
\[
\xymatrix{
\gr \ar@<.5ex>[d]\ar@<-.5ex>[d]&P \ar@{->>}[ld]_-{l} \ar[rd]^-{r}&\hr \ar@<.5ex>[d]\ar@<-.5ex>[d]\\
\go&&\ho.
}
\]
and such that 
\begin{enumerate}
\item $P$ is a free and proper left $\gr$-space.
\item $P$ is a free and proper right $\hr$-space.
\item The $\gr$- and $\hr-$actions commute.
\item The moment maps $l$ and $r$ are open and induce homeomorphisms
\begin{equation}
P/\ho \to \go
\end{equation}
and 
\begin{equation}
\go\backslash P \to \ho.
\end{equation}
\end{enumerate}

\end{definition}

As mentioned in the introduction, two equivalent groupoids in the sense above define Morita equivalent $C^*$-algebras (reduced or maximal). This is the content of Renault's equivalence theorem (theorem 2.70 in \cite{Williams} or Renault's original source \cite{Ren}). But even more, the precise description of an equivalence bibundle gives a precise description of the associated Morita bimodule (ref.cit. or \ref{Rentheorem} below.)

Equivalence bibundles are a particular case of the notion of generalized morphisms. In the context of foliations and Lie groupoids this last notion seemed to appear first in the work of Hilsum and Skandalis, \cite{HS}. 

\subsection*{Examples of equivalence bibundles}

\begin{enumerate}

  \item (Strict morphism of groupoids)
    Let $f : \hr \to \gr$ be a strict morphism of groupoids and
    $$P_f = \ho \times_{\go} \gr = \{ (h, \gamma) | f(h) = s(\gamma) \}$$
    $P_f$ is a principal $\gr$-$\hr$-bibundle for the right $\gr$-action given by composition $(h, \gamma)\cdot \gamma' = (h, \gamma\gamma')$ and left $\hr$-action given by
    $$\eta \cdot (s(\eta),\gamma) = (r(\eta), f(\eta) \cdot \gamma )$$
    A principal $\gr$-$\hr$-bibundle $P_f$ comes from a strict morphism $f$ if and only if the map $\pi$ admits a continuous section.

\item(Transversals of foliated manifolds)\label{transversals}
      A \textsl{complete transversal} of a foliated manifold $(M, \F)$ is a $C^1$-immersion $T:N\rightarrow M$ of a manifold N of dimension $q=codim(\F)$ which is transversal to the leaves of $\F$ and such that $T(N)$ meets any leaf of $\F$ in at least one point. 
      The holonomy groupoid $Hol_T(M,\F)$ \textsl{reduced to $T$} is then the pull-back groupoid 
\[
\xymatrix{
  Hol_T(M,\F) \ar@<.5ex>[d]\ar@<-.5ex>[d] \ar[r]^{^*T^*}& Hol(M,\F) \ar@<.5ex>[d]\ar@<-.5ex>[d]\\
N\ar[r]^T& M
}
\]   
which arrows are holonomy classes $[(x,\gamma, y)]$ for $x,y\in N$ and $\gamma$ a path from $T(x)$ to $T(y)$ inside the leaves of $\F$. 

The bibundle $P_{^*T^*}^{-1} = \{ (y,\gamma) \in N\times_r Hol(M), T(y) = r(\gamma)\} \simeq Hol(M,\F)^N$ is a $Hol_T(M,\F)$-$Hol(M,\F)$ equivalence bibundle with left action given by $[(x, \gamma, y)]\cdot (y,\gamma') = (x,\gamma\cdot\gamma')$
and right action $(y,\gamma)\cdot\gamma' = (y, \gamma\cdot\gamma')$.

Composition of bibundles $P_{^*T^*}^{-1} \times_{Hol(M,\F)} P_{^*T'^*} \simeq Hol(M,\F)_{N'}^N$ is a Morita equivalence between $Hol_T(M,\F)$ and $Hol_{T'}(M,\F)$. The equivalence bibundle is the quotient of pairs of composable arrows $(\gamma, \gamma') \in Hol(M,\F)^N \times Hol(M,\F)_{N'}$ by forgetting midterm $s(\gamma) = r(\gamma') \in M$.

If $T,T'$ are complete transversals, $T'' = T\sqcup T' : N \sqcup N' \rightarrow M$ is also a complete transversal and the inclusions $Hol_T(M,\F)\hookrightarrow Hol_{T''}(M,\F)$ and $Hol_{T'}(M,\F)\hookrightarrow Hol_{T''}(M,\F)$ induce another Morita equivalence between $Hol_T(M,\F)$ and $Hol_{T'}(M,\F)$. 

\item(Reduced groupoids with open maps)\label{reduced}

More generally, suppose that $G$ is a locally compact Hausdorff groupoid with open range and source maps and let $A$ and $B$ be \textsl{open} subsets of $G^{(0)}$ meeting every $G$-orbit in $G^{(0)}$. Then $P = G^A_B$ is an equivalence between the reduced groupoids $G_A^A$ and $G_B^B$. The result still holds if $A$ and $B$ are not open but the restrictions of $r$ and $s$ to $G_A^B$ remain open. 

\[
\xymatrix{
G_A^A \ar@<.5ex>[d]\ar@<-.5ex>[d]&G_B^A \ar@{->>}[ld]_r \ar[rd]^s &G_B^B \ar@<.5ex>[d]\ar@<-.5ex>[d]\\
A&&B.
}
\]

If $A$ and $B$ are open restrictions of $s$ and $r$ to $G_B^A$ are open maps since $G_A^B$ is open in $G$.  Associativity of composition in $G$ implies commutative actions $\gamma_1(\gamma_2\gamma_3) = (\gamma_1\gamma_2)\gamma_3$. To show that $r : P \rightarrow A$ induces a homeomorphism $P/G_B^B \simeq A$, first notice that the relation $G_A^B = AGB$ implies that $r$ is onto. Then let $\gamma,\gamma'\in P=G_A^B$ such that $r(\gamma) = r(\gamma')$. Write $\gamma' = \gamma(\gamma^{-1}\gamma')$ and note that $\gamma^{-1}\gamma' \in G_B^B$ so that $r$ is injective on $P/G_B^B \simeq A$. For more details see example 2.39 in \cite{Williams}, page 46.

\end{enumerate}

\section{Solenoids as homogeneous spaces}

The Pontryagin dual of the group $\ZZ[1/p]$ of $p$-adic rationals is the $p$-solenoid given by:
$$\mathscr{S}_p = \left \{ (z_n)_{n\in\NN} \in \mathbb{T}^\NN \,,\, \forall n\in\NN \,: z_{n+1}^p = z_n \right \}$$
with the induced topology from the injection $\mathscr{S}_p \hookrightarrow \mathbb{T}^\NN$. 

\subsection{Exact sequences for solenoids}

Let $\omega : \RR \rightarrow \Sp$ be the standard "winding line" defined for any $t\in\RR$ by $ \omega(t) = ( e^{2i\pi t / p^n} )_{n\in\NN}$. For  $r=\sum_{j=k}^\infty a_jp^j \in\Qp$ denote $\{r\} =  \sum_{j=k}^{-1} a_jp^j$ the fractional part of $r$ and let $\zeta : \Qp \rightarrow \Sp$  be defined by the sequence $ \zeta(r) = (e^{2i\pi \{r / p^n\}})_{n\in\NN}$. We thus may define $\pi : \RR\times\Qp \rightarrow \Sp$ by $\pi(t,r) = w(t)\cdot \zeta(-r)$ and $\delta : \ZZ[1/p] \rightarrow \RR\times\Qp$ by $\delta(n) = (n, n)$. 

\begin{proposition}
The maps $\pi$ and $\Delta$ induce the exact sequences of topological groups 
\[
  \xymatrix{
    1 \ar[r] & \ZZ \ar[r]^\delta &  \RR \times \ZZ_p \ar[r]^{\pi} &  \Sp \ar[r] & 1\\
  }
\]
\[
  \xymatrix{
     1 \ar[r] & \ZZ[1/p] \ar[r]^\delta & \RR \times \Qp \ar[r]^{\pi} &  \Sp \ar[r] & 1
  }
\]
so that $\Sp$ is a homogeneous space for the action $\rho : \RR \times \Qp \xrightarrow{\pi} \text{Homeo}(\Sp)$ by translation.
\end{proposition}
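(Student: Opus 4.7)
My plan is to verify exactness of both sequences directly from the defining formulas of $\omega$, $\zeta$, $\pi$, and $\delta$, handling them simultaneously: the first sequence is the restriction of the second to the open subgroup $\RR\times\ZZ_p\subset\RR\times\Qp$, whose intersection with $\delta(\ZZ[1/p])$ is $\delta(\ZZ)$. The homogeneous-space statement will then be an immediate consequence of surjectivity of $\pi$.

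First I would check that $\omega$, $\zeta$, and hence $\pi$, are continuous group homomorphisms into $\Sp$. For $\omega$ this is routine; for $\zeta$, continuity follows because the fractional-part map $\{\cdot\}\colon\Qp\to\ZZ[1/p]/\ZZ$ kills the open subgroup $\ZZ_p$, and the homomorphism property reduces (componentwise) to the identity
\[
\{r+s\}-\{r\}-\{s\}\in\ZZ_p\cap\ZZ[1/p]=\ZZ.
\]
The cancellation $\pi\circ\delta=1$ on $n\in\ZZ[1/p]$ uses the same intersection: for each $k\geq 0$,
\[
\frac{n}{p^k}+\Bigl\{-\frac{n}{p^k}\Bigr\}=-\Bigl(-\frac{n}{p^k}-\Bigl\{-\frac{n}{p^k}\Bigr\}\Bigr)\in\ZZ_p\cap\ZZ[1/p]=\ZZ,
\]
so $\omega(n)_k\,\zeta(-n)_k=1$ in $\TT$.

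Next I would establish surjectivity of $\pi$ by an explicit lift. Given $z=(z_k)\in\Sp$, choose $t\in\RR$ with $e^{2i\pi t}=z_0$; then $z\cdot\omega(-t)$ starts with $1$ and may be written as $\bigl(e^{2i\pi c_k/p^k}\bigr)_k$ with $c_k\in\{0,\ldots,p^k-1\}$ and $c_{k+1}\equiv c_k\pmod{p^k}$. The sequence $(-c_k\bmod p^k)_k$ converges to a unique $s\in\ZZ_p\subset\Qp$ satisfying $\zeta(-s)_k=e^{2i\pi c_k/p^k}$, whence $\pi(t,s)=z$. For the kernel, I would start from $(t,r)\in\RR\times\Qp$ with $\pi(t,r)=1$ and reduce in three stages modulo $\delta(\ZZ[1/p])$: first subtract $\delta(\{r\})$ to pass to $r\in\ZZ_p$; then read the $0$-th coordinate $e^{2i\pi t}=1$ to force $t\in\ZZ$ and subtract $\delta(t)$; finally invoke $\bigcap_n p^n\ZZ_p=\{0\}$ to conclude $r=0$. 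Omitting the first reduction (irrelevant when $r\in\ZZ_p$) yields kernel $\delta(\ZZ)$ for the first sequence.

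To conclude, both source groups are $\sigma$-compact locally compact Hausdorff and $\Sp$ is Hausdorff, so the continuous surjection $\pi$ is automatically open by the open-mapping theorem for locally compact groups, making both sequences exact as topological groups. Transitivity of the translation action $\rho$ of $\RR\times\Qp$ on $\Sp$ is then immediate from the surjectivity of $\pi$, and $\Sp$ is the corresponding homogeneous space. The only real obstacle is the careful bookkeeping of $p$-adic fractional parts in the first two verifications: every nontrivial identity there collapses to $\ZZ_p\cap\ZZ[1/p]=\ZZ$ inside $\Qp$ together with the $\ZZ_p$-invariance of $\{\cdot\}$, and the task is to isolate these cleanly rather than unfold explicit $p$-adic expansions.
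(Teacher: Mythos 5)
Your proof is correct, and its computational core coincides with the paper's: the explicit lift (choose $t$ with $e^{2i\pi t}=z_0$, observe that $\omega(-t)\cdot z$ has $p^k$-torsion coordinates and hence lies in $\zeta(-\ZZ_p)$) is exactly the transitivity argument in the paper's third lemma, and your three-stage kernel reduction is the paper's identification of $\delta(\ZZ)$ and $\delta(\ZZ[1/p])$ as isotropy groups. Where you diverge is in the topological bookkeeping. The paper spends two of its three supporting lemmas constructing the topology of $\Qp$ as the inductive limit $\varinjlim p^{-n}\ZZ_p$ with open inclusions, and verifies continuity of the translation action by patching together the compatible actions $\rho_n$ of $\RR\times p^{-n}\ZZ_p$; you instead take the standard locally compact Hausdorff group structure of $\Qp$ as given and obtain continuity of the action directly from continuity of $\pi$ and of multiplication in $\Sp$. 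Your appeal to the open mapping theorem for $\sigma$-compact locally compact groups is a genuine addition: it is what makes the sequences exact \emph{as topological groups} (i.e.\ $\pi$ open, $\Sp\cong(\RR\times\Qp)/\delta(\ZZ[1/p])$ as a topological quotient), a point the paper leaves implicit. The trade-off is that the paper's inductive-limit lemmas are reused later in the article, whereas your route is shorter but leans on standard structure theory; also, when citing the open mapping theorem you should note that $\Sp$ is (locally) compact, not merely Hausdorff, since that hypothesis on the target is needed. These are presentational points, not gaps.
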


\begin{proof}
  The result follows from the topological properties of $p$-adic integers $\ZZ_p$ and rationals $\Qp$ detailed in the next three lemmas.
\end{proof}

\subsection{$\Sp$ is an homogeneous space for the actions of $\RR \times \ZZ_p$ and $\RR \times \Qp$}

\begin{lemma}
The morphisms $\varphi_{n} : p^{-n}\ZZ_p \rightarrow p^{-(n+1)}\ZZ_p$ of the inductive system $\Qp = \varinjlim p^{-n}\ZZ_p$  are open homeomorphisms to their image and each $\RR \times p^{-n}\ZZ_p$ is acting continuously on $\Sp$.
\end{lemma}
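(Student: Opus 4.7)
The statement contains two independent claims, which I treat separately.

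For the first claim, the key remark is that $\ZZ_p$ is a compact open subgroup of $\Qp$ for its standard (ultrametric) topology, and multiplication by $p^{-n}$ is a topological automorphism of $\Qp$. Consequently each $p^{-n}\ZZ_p$ is itself a compact open subgroup of $\Qp$, of index $p$ in $p^{-(n+1)}\ZZ_p$, with explicit coset decomposition
\[
p^{-(n+1)}\ZZ_p \;=\; \bigsqcup_{k=0}^{p-1}\bigl(k\,p^{-(n+1)} + p^{-n}\ZZ_p\bigr).
\]
The map $\varphi_n$ is the set-theoretic inclusion, hence continuous and injective; its image is open by the decomposition above, and the inverse on the image is continuous because both sides carry the topology induced from $\Qp$. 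Thus $\varphi_n$ is an open homeomorphism onto its image.

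For the second claim, the action of $\RR\times p^{-n}\ZZ_p$ on $\Sp$ is obtained by restricting the formula $(t,r)\cdot s = \omega(t)\,\zeta(-r)\,s$, using the group multiplication of $\Sp\subset\TT^\NN$. Since multiplication in $\TT^\NN$ is continuous in the product topology, it suffices to prove that $\omega$ and the restriction of $\zeta$ to $p^{-n}\ZZ_p$ are continuous. Continuity of $\omega$ is immediate from the formula $\omega(t)=(e^{2i\pi t/p^k})_{k\in\NN}$, each factor being continuous in $t$. For $\zeta$, the crucial observation is that whenever $r-r'\in p^N\ZZ_p$ and $k\leq N$ one has $(r-r')/p^k\in\ZZ_p$, so the difference $\{r/p^k\}-\{r'/p^k\}$ lies both in $\ZZ[1/p]$ and in $\ZZ_p$, hence in $\ZZ[1/p]\cap\ZZ_p=\ZZ$; therefore $e^{2i\pi\{r/p^k\}}=e^{2i\pi\{r'/p^k\}}$ for every such $k$. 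Consequently, given a basic neighborhood of $\zeta(r_0)$ in $\Sp$ controlling the first $N+1$ components, the $p$-adic neighborhood $(r_0+p^N\ZZ_p)\cap p^{-n}\ZZ_p$ of $r_0$ is mapped inside it, proving continuity of $\zeta$.

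The main obstacle is this continuity of $\zeta$: one must translate the ultrametric topology of $\Qp$ (whose basic neighborhoods are the $p^N\ZZ_p$) into the product topology of $\Sp\subset\TT^\NN$ (whose basic neighborhoods constrain finitely many components). The arithmetic identity $\ZZ[1/p]\cap\ZZ_p=\ZZ$ is the crucial ingredient that turns $p$-adic closeness into \emph{exact equality} of the first few $\Sp$-components; once this is in place, the rest of the argument is routine continuity of the group operations on $\TT^\NN$.
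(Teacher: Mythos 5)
Your proof is correct, but it follows a genuinely different route from the paper's. For the first claim, the paper stays entirely inside its limit-theoretic construction: it treats $\varphi_n$ as the map of projective limits induced by a compatible family of inclusions, proves injectivity from the universal property, gets ``homeomorphism onto image'' from compact-to-Hausdorff, and proves openness of the image by identifying $\varphi_n(p^{-n}\ZZ_p)$ with $\ker q_1$ and exhibiting an explicit inverse built from a compatible family of maps. You instead invoke the standard ultrametric picture of $\Qp$, in which each $p^{-n}\ZZ_p$ is a compact open subgroup and openness of the image falls out of the finite (index $p$) coset decomposition of $p^{-(n+1)}\ZZ_p$. This is shorter and more concrete, but note that it tacitly identifies the projective-limit topology on $p^{-n}\ZZ_p$ (the one the paper actually puts on the terms of the inductive system) with the subspace topology induced from the valuation on $\Qp$; that identification is standard, but in the paper's framework --- where the topology of $\Qp$ is being \emph{constructed} from these pieces and only established as a locally compact group in the next lemma --- it should be stated explicitly to avoid the appearance of circularity. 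For the second claim your argument is actually more complete than the paper's: the paper merely asserts continuity of the action on the shifted solenoids $X_n$ and appeals to a cofinal-limit identification $X_n\simeq\Sp$, whereas you isolate the real content, namely the continuity of $\zeta$, and prove it via the arithmetic fact $\ZZ[1/p]\cap\ZZ_p=\ZZ$, which upgrades $p$-adic closeness of $r$ and $r'$ to exact equality of the first $N+1$ components of $\zeta(r)$ and $\zeta(r')$. That computation (the difference of fractional parts lies in $\ZZ_p$ when $(r-r')/p^k\in\ZZ_p$, and always lies in $\ZZ[1/p]$) is correct and is the right mechanism; it is a welcome addition to what the paper leaves implicit.
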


\begin{proof}
$\varphi_n$ as a group morphism between projective limits is defined by the compatible family of inclusions
\[
  \label{projective_diagram}
  \xymatrix{
    p^{-n}\ZZ_p \ar[d]^{\varphi_n} & \dots \ar[r]^{\phi_{n,1}} &  p^{-n}\ZZ/p^{-n+1} \ar[d]^{i_1} \ar[r]^{\phi_{n,0}} &  p^{-n}\ZZ/p^{-n} \ar[d]^{i_0} &  \\
    p^{-(n+1)}\ZZ_p  & \dots \ar[r]^{\phi_{n+1,2}} &  p^{-(n+1)}\ZZ/p^{-n+1} \ar[r]^{\phi_{n+1,1}} &  p^{-(n+1)}\ZZ/p^{-n} \ar[r]^{\phi_{n+1,0}} &  p^{-(n+1)}\ZZ/p^{-(n+1)} 
  }
\]

with first projection map $p^{-n}\ZZ_p \rightarrow  p^{-(n+1)}\ZZ/p^{-(n+1)}$ obtained by composition with any inclusion map $i_k$ from first row and morphisms $\phi_{n+1, k} = \mod p^k$ to the bottom. 

To show that $\varphi$ is injective, observe that an element $g$ in a projective limit $G = \varprojlim G_k$ is 0 if and only if all its projections $g_k = p_k(g)$ are 0.
Indeed $u_k:\{pt\}\rightarrow 0_k \in G_k$ are compatible maps in the set category and define a \textsl{unique} (set) morphism $u:\{pt\}\rightarrow 0 \in G$ so that $g_k = 0$ is the only image possible for $g=0$.
So for $g\in p^{-n}\ZZ_p$ non-zero there exists a non-zero $g_k \in p^{-k}\ZZ/p^{-(k+1)}$ and $i_k(g_k)$ is non-zero so that $\varphi_n(g)$ cannot be zero. The homomorphism $\varphi_n$ thus has a trivial kernel and is injective. 
Being a bijective continuous map from a compact space to a Hausdorff space,  $\varphi_n$ is a homeomorphism on its image.  

To show that $\varphi_n$ has open range, we verify that $\varphi_n(p^{-n}\ZZ_p) = \ker q_1 = q_1^{-1}(\{0\})$ where $(q_k)$ denote the projections from $p^{-(n+1)}\ZZ_p$ to $p^{-(n+1)}\ZZ/p^{-(n+1)+k}$. The result will follow as $q_1$ is continuous and $\{0\}$ is open in the discrete topology of $\ZZ/p^n\ZZ$.

First commutativity of the diagram \ref{projective_diagram} implies that $\varphi_n(p^{-n}\ZZ_p) \subset \ker q_1$.
Now $f_k = i_k^{-1} \circ q_k : \ker q_1 \rightarrow p_k( p^{-n}\ZZ_p)$ is a compatible family of maps that induce a map $f:\ker q_1 \rightarrow p^{-n}\ZZ_p$ which is an inverse for $\varphi_n$. Indeed $q_k\circ\varphi_n\circ f = i_k \circ p_k \circ f = i_k \circ i_k^{-1} \circ q_k = q_k$ so that $\varphi_n \circ f = \id_{\ker q_1}$ and $p_k\circ f \circ \varphi_n = i_k^{-1} \circ q_k \circ \varphi_n = i_k^{-1} \circ i_k \circ p_k = p_k$ so that $f \circ \varphi_n = \id$.

Each $\RR \times p^{-n}\ZZ_p$ is acting continuously on $X_n = \left \{ (z_{k-n})_{k\in\NN} \in \mathbb{T}^{\NN-n} \,,\, \forall k\in\NN \,: z_{k+1-n}^p = z_{k-n} \right \}$ by $\rho_n(t, g, z) = \omega(t) \cdot \zeta(-g) \cdot z\in \mathbb{T}^{\NN-n}$ with the induced topology from $\mathbb{T}^{\NN-n}$ and there is a natural (unique) isomorphism $X_n \simeq \Sp$ as cofinal limits induced from the inclusion $\Sp = X_0 \subset X_n$.

\end{proof}

\begin{lemma}
$\Qp$ is a locally compact Hausdorff topological group, with open inclusion maps $j_n : p^{-n}\ZZ_p \rightarrow \Qp$.
\end{lemma}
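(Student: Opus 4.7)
The plan is to equip $\Qp$ with the inductive limit topology from the identification $\Qp = \varinjlim p^{-n}\ZZ_p$ and then verify each required property by reducing to the preceding lemma. Concretely, I would declare $U \subset \Qp$ to be open if and only if $j_n^{-1}(U)$ is open in $p^{-n}\ZZ_p$ for every $n$; this makes each $j_n$ continuous by construction.

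First I would establish openness of $j_n$, which is the key property from which the others follow. Given an open $U \subset p^{-n}\ZZ_p$, one needs $j_m^{-1}(j_n(U))$ open in $p^{-m}\ZZ_p$ for every $m$. For $m\geq n$, this equals $\varphi_{m-1}\circ\cdots\circ\varphi_n(U)$, which is open by the preceding lemma as a composition of open embeddings. For $m<n$, iterating the same lemma presents $p^{-m}\ZZ_p$ as an open subgroup of $p^{-n}\ZZ_p$, so $j_m^{-1}(j_n(U))$ is the trace of an open set on an open subset, hence open.

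The Hausdorff property then follows because any two distinct points $x,y\in\Qp$ lie in a common $p^{-n}\ZZ_p$ in which they can be separated by disjoint opens; the open embedding $j_n$ pushes these to disjoint opens of $\Qp$. Local compactness is immediate: each point of $\Qp$ has some $j_n(p^{-n}\ZZ_p)$ as a compact open neighbourhood, since $p^{-n}\ZZ_p$ is compact and $j_n$ has open image.

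It remains to establish continuity of the group operations. Algebraically, addition and inversion restrict on each $p^{-n}\ZZ_p\times p^{-n}\ZZ_p$ (respectively $p^{-n}\ZZ_p$) to the standard continuous operations on the $p$-adic group. The main subtlety, and the step I expect to require the most care, is verifying that this local continuity propagates to $\Qp$: one needs that the sets $j_n(p^{-n}\ZZ_p)\times j_n(p^{-n}\ZZ_p)$ — \emph{open} in $\Qp\times\Qp$ because the $j_n$ are now known to be open — form an open cover of $\Qp\times\Qp$ on which continuity can be tested piece by piece. With this in hand, both operations are continuous on each chart and therefore continuous globally, completing the verification that $\Qp$ is a locally compact Hausdorff topological group with the claimed family of open inclusions.
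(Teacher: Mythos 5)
Your proposal is correct and follows essentially the same route as the paper: equip $\Qp$ with the inductive limit topology, deduce openness of each $j_n$ from the continuity and openness of the $\varphi_n$ established in the preceding lemma, and obtain local compactness, Hausdorffness, and continuity of the group law by working on the open cover by the $j_n(p^{-n}\ZZ_p)$. If anything, you are slightly more careful than the paper on the one delicate point, namely that continuity of addition on $\Qp\times\Qp$ can be tested on the open cover by the sets $j_n(p^{-n}\ZZ_p)\times j_n(p^{-n}\ZZ_p)$ rather than on an inductive-limit topology of products.
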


\begin{proof}
For any $n\in \NN$,  $p^{-n}\ZZ_p$ is open in $p^{-(n-1)}\ZZ_p$, $p^{-(n-2)}\ZZ_p,\dots$ by continuity of $\varphi_{n-1}, \varphi_{n-2}$ and is open in $p^{-(n+1)}\ZZ_p, p^{-(n+2)}\ZZ_p, \dots$ since $\varphi_{n}$, $\varphi_{n+1}$ are open. Thus $p^{-n}\ZZ_p$ is open for the inductive topology in $\Qp$.
The same argument is true for any open subset of $p^{-n}\ZZ_p$ and $j_n : p^{-n}\ZZ_p \rightarrow \Qp$ are thus open.

$\Qp = \varinjlim_{\varphi_n} p^{-n}\ZZ_p$ where each $\varphi_{n}$ has open range and each $p^{-n}\ZZ_p$ is a locally compact Hausdorff space. Thus $\Qp$ is a locally compact Hausdorff space. For $x,y\in\Qp$ choose $n$ large so that $x,y\in p^{-n}\ZZ_p$. Group operation is continuous on $p^{-n}\ZZ_p$ which is open in $\Qp$ so that composition $\Qp \times \Qp \rightarrow \Qp$ is continuous and $\Qp$ is a topological group.  

\end{proof}

\begin{lemma}
$\Sp$ is a homogeneous space for the action $\rho : \RR \times \Qp \xrightarrow{\pi} \text{Homeo}(\Sp)$ by translation
\end{lemma}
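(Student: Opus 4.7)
The plan is to verify that $\pi : \RR \times \Qp \to \Sp$ defined by $\pi(t,r) = \omega(t) \cdot \zeta(-r)$ is a continuous surjective group homomorphism; the translation action $\rho(t,r) \cdot z = \pi(t,r) \cdot z$ is then automatically continuous and transitive, which is exactly the assertion that $\Sp$ is a homogeneous space under $\rho$.

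First, I would check that $\omega$ and $\zeta$ take values in $\Sp$ and are group homomorphisms. For $\omega(t)=(e^{2\pi i t/p^n})_n$ the compatibility $(e^{2\pi i t/p^{n+1}})^p = e^{2\pi i t/p^n}$ is immediate. For $\zeta$, both well-definedness into $\Sp$ and the homomorphism property reduce to the elementary fact that $\{r/p^n\} + \{r'/p^n\} - \{(r+r')/p^n\} \in \ZZ$, which kills the difference modulo $\ZZ$. Continuity of $\omega$ is clear; for $\zeta$ I argue locally: on $p^{-n}\ZZ_p$, the map $r \mapsto \{r/p^k\}$ (for $k \geq n$) factors through the continuous quotient $p^{-n}\ZZ_p \to p^{-n}\ZZ/p^{k-n}$, so $\zeta$ is continuous on each $p^{-n}\ZZ_p$; the preceding lemma identifies $\Qp$ with the inductive limit with open inclusions, so continuity extends to all of $\Qp$. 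Since $\Sp$ is abelian, $\pi$ is then a continuous group homomorphism.

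For surjectivity, given $z = (z_n) \in \Sp$, pick $t \in \RR$ with $e^{2\pi i t} = z_0$, and consider $w := \omega(t)^{-1} \cdot z$. Since $w_0 = 1$ and $w_{n+1}^p = w_n$, each $w_n$ is a $p^n$-th root of unity, so $w_n = e^{2\pi i a_n / p^n}$ for a unique $a_n \in \ZZ/p^n\ZZ$ with $a_{n+1} \equiv a_n \pmod{p^n}$. The sequence $(a_n)$ defines an element $s \in \varprojlim \ZZ/p^n\ZZ = \ZZ_p \subset \Qp$, and by construction $\zeta(s) = w$, whence $z = \omega(t) \cdot \zeta(s) = \pi(t,-s)$. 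Surjectivity of $\pi$ onto the group $\Sp$ forces transitivity of the translation action (for any $z, z' \in \Sp$ write $z' z^{-1} = \pi(g)$), and continuity of $\rho$ is inherited from continuity of $\pi$ composed with multiplication in $\Sp$. The main technical point is the local continuity of $\zeta$, which is controlled by the open inductive system established in the previous lemma; once that is in place the remaining verifications are routine.
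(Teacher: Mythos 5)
Your proof is correct and follows essentially the same route as the paper: transitivity is obtained by exactly the paper's argument (choose $t$ with $e^{2\pi i t}=z_0$ and identify the remaining factor as $\zeta$ of a $p$-adic integer), and continuity rests on the same open inductive-limit structure of $\Qp$ supplied by the preceding lemmas. The only cosmetic difference is that you establish continuity of $\zeta$ (hence of $\pi$) directly via local constancy of $r\mapsto\{r/p^k\}$ on each $p^{-n}\ZZ_p$, whereas the paper patches together the previously constructed continuous actions $\rho_n$ of $\RR\times p^{-n}\ZZ_p$; these amount to the same verification.
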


\begin{proof}
For $r\in\Qp$ take $n$ large enough so that $r\in p^{-n}\ZZ_p$ and define $\rho(t, r, z) = \rho_n(t, r, z)$ from the action $\rho_n : \RR \times p^{-n}\ZZ_p \times \Sp \rightarrow \Sp$ of $\RR \times p^{-n}\ZZ_p$ on $\Sp$.
From the compatibility of the actions $\rho_n$ with the inclusions $j_n$ the action $\rho : \RR\times \Qp \times \Sp \rightarrow \Sp$ is well defined. Associativity follows in the same way.

The action $\rho$ is continuous as each $p^{-n}\ZZ_p$ is open in $\Qp$ and the actions $\rho_n$ are continuous and compatibles, so that for any open set $\Omega \in \Sp$, $\rho^{-1}(\Omega) = \bigcup \rho_n^{-1}(\Omega)$ is a union of open sets hence open.

The action $\rho$ is also transitive as it is already transitive on $\RR \times \ZZ_p$ : for $z = (z_k)_{k\in\NN}\in \Sp$ choose $t\in\RR$ such that $e^{2i\pi t} = z_0$. Then $y = \omega(-t)\cdot z$ satisfies $(y_k^{p^k})_{k\in\NN} = 1$ so that $y=\zeta(-r)$ with $r\in\ZZ_p$ and $z = \omega(t)\zeta(-r)$. Similar computation shows that $\delta(\ZZ) = \{(k, k) \in \ZZ_p\times\RR \}$ is acting trivialy for the $\RR \times \ZZ_p$ action and that $\delta(\ZZ[1/p]) = \bigcup \delta(p^{-n}\ZZ)$ is the isotropy group of the $\RR \times \Qp$ action on $\Sp$.
 
\end{proof}

\section{Equivalence bibundles of noncommutative solenoids}

\subsection{Solenoidal groupoids $S_\alpha$ and $\mathscr{S}_\alpha$ }


Let $p$ be a prime number, $\Sp$ the associated solenoid and $\alpha \in\RR\times\Qp$.\\ 
We introduce the full solenoidal groupoid  
\begin{equation}
\Sol_\alpha = (\RR\times\Qp)\ltimes_{\alpha}\Sp^2 \rightrightarrows \Sp^2
\end{equation}
as the action groupoid defined by the $\RR\times\Qp$ action $\pi$
$$q\cdot (x,y) = (\pi(q)x, \pi(q\alpha)y)$$
with $\pi$ acting by translation on $\Sp$ as a $\RR \times \Qp$ homogenous space.

\bigskip

We also introduce the reduced solenoidal groupoid  
\begin{equation}
S_\alpha:=\ZZ[1/p]\ltimes_\alpha \Sp \rightrightarrows \Sp
\end{equation}
defined by the $\ZZ[1/p]$ action on $\Sp$ : $q\cdot z =  \pi(q\alpha)z$

The groupoid structural maps are given as follows:

\begin{itemize}
\item The source and range maps are $s(q,z) = z$, $r(q,z) = q\cdot z$.\\
\item The composition is $(q', z') \cdot (q, z) = ( q' + q , z)$ whenever $q\cdot z = z'$.\\ 
\item The inverse is $(q, z)^{-1} = (-q, q\cdot z)$. 
\end{itemize}


Now, $S_\alpha$ and $\Sol_\alpha$ are equivalent groupoids. Indeed:

  \begin{itemize}
    \item $S_\alpha$ is a subgroupoid of $\Sol_\alpha$ through the groupoid immersion $i: S_\alpha \To \Sol_\alpha$ defined by $i(n,z) = (\delta(n), (1,z))$.


    \item From example 1.3, $\Sol_\alpha^V$, with $V=1\times\Sp \subset \Sp^2$, is an $S_\alpha$-$\Sol_\alpha$ equivalence bibundle.
  \end{itemize}

\subsection{$S_\alpha$-$S_{\alpha^{-1}}$ equivalence bibundle }

Let $\alpha \in\RR\times\Qp$ such that $\pi(\alpha) \in \Sp\setminus \{1\}$. In this section we will give the explicit description of the equivalence bibundle corresponding to the matrix $M = 
  \begin{pmatrix}
    0 & 1  \\
    1 & 0 
  \end{pmatrix}$, that gives $\alpha^{-1}=M\cdot \alpha$.

Denote $P_{\alpha,\alpha^{-1}}$ the bibundle 

\begin{equation}
\xymatrix{
  *+[l]{\ZZ[1/p]\ltimes_\alpha \Sp } \ar@<.5ex>[d]\ar@<-.5ex>[d] & \RR \times \Qp \ar@{->}[ld]_\mu \ar[rd]^\epsilon & *+[r]{\ZZ[1/p]\ltimes_{\alpha^{-1}} \Sp}  \ar@<.5ex>[d]\ar@<-.5ex>[d] \\
  \Sp & & \Sp
}
\end{equation}
where 
\begin{itemize}
\item the left and right moment maps are $\mu(q) = \pi(q\alpha)^{-1}$ and $\epsilon(q) = \pi(q)$,

\item the left action is $(n,z)\cdot q = n+q$, given $z = \pi((q+n)\alpha)^{-1}$, and 

\item the right action is $q \cdot (n,z) = q + n\alpha^{-1}$, given $z = \pi(q)$.
\end{itemize}

\begin{proposition}

  $P_{\alpha,\alpha^{-1}}$ is an equivalence bibundle between $S_\alpha$ and $S_{\alpha^{-1}}$ 

\end{proposition}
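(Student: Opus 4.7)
The plan is to verify in turn the four defining properties of a bibundle equivalence: well-definedness and commutativity of the two actions, freeness, properness, and the identification of the orbit quotients with $\Sp$ through the moment maps. Throughout, the main tool is the exact sequence $1\to \ZZ[1/p] \xrightarrow{\delta} \RR\times\Qp \xrightarrow{\pi} \Sp \to 1$ from the previous section, whose quotient map $\pi$ is an open continuous surjection of topological groups. The invertibility of $\alpha$ in the ring $\RR\times\Qp$ is an implicit hypothesis needed for $\alpha^{-1}$ and the fractional linear transformation $M\cdot\alpha$ to make sense; under this assumption multiplication by $\alpha$ is a homeomorphism, so $\mu(q)=\pi(q\alpha)^{-1}$ and $\epsilon(q)=\pi(q)$ are open continuous surjections onto $\Sp$.

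First I would check that the action formulae are well-defined and that $\mu$ is right-invariant while $\epsilon$ is left-invariant, using $\pi\circ\delta = 0$. The condition $z = \pi((q+n)\alpha)^{-1}$ defining the left action rewrites as $\pi(\delta(n)\alpha)\cdot z = \mu(q)$, matching the range of $(n,z)\in S_\alpha$ with $\mu(q)$; moreover $\epsilon((n,z)\cdot q) = \pi(\delta(n)+q) = \pi(q) = \epsilon(q)$. Dually, $\mu(q\cdot(n,z)) = \pi(q\alpha + \delta(n))^{-1} = \pi(q\alpha)^{-1} = \mu(q)$. Commutativity is a direct computation: both $(n,z)\cdot(q\cdot(n',z'))$ and $((n,z)\cdot q)\cdot(n',z')$ equal $\delta(n)+q+n'\alpha^{-1}$, and one verifies that the compatibility conditions defining each action are preserved by the opposite one.

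For freeness, $(n,z)\cdot q = q$ forces $\delta(n)=0$ hence $n = 0$, while $q\cdot(n',z') = q$ forces $n'\alpha^{-1} = 0$ hence $n' = 0$ by invertibility of $\alpha$; in both cases the groupoid element reduces to a unit. Properness follows from observing that both actions are restrictions of the translation action of $\RR\times\Qp$ on itself to the closed discrete subgroups $\delta(\ZZ[1/p])$ and $\alpha^{-1}\delta(\ZZ[1/p])$, and a closed discrete subgroup of a locally compact group acts properly by translation.

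Finally, to identify the orbit quotients I use the exact sequence once more. If $\mu(q)=\mu(q')$ then $(q-q')\alpha \in \ker\pi = \delta(\ZZ[1/p])$, so $q - q' = n\alpha^{-1}$ for some $n\in\ZZ[1/p]$, placing $q$ and $q'$ in the same right $S_{\alpha^{-1}}$-orbit; combined with surjectivity and openness of $\mu$, and the fact that quotient maps by topological group actions are automatically open, this yields the required homeomorphism $P/S_{\alpha^{-1}} \to \Sp$. The analogous statement for $\epsilon$ is immediate from $\ker\pi = \delta(\ZZ[1/p])$. I expect the main bookkeeping subtlety to be the consistent distinction between the scalar $n\in\ZZ[1/p]$ and its diagonal embedding $\delta(n)\in\RR\times\Qp$, and the realisation that the ring-theoretic product $n\alpha^{-1}$ in $\RR\times\Qp$ encodes precisely the twist needed for the right $S_{\alpha^{-1}}$-orbits to coincide with the fibers of $\mu$.
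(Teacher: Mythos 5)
Your argument is correct, but it follows a genuinely different route from the paper's. You verify the bibundle axioms directly on $P=\RR\times\Qp$, using the exact sequence $1\to\ZZ[1/p]\xrightarrow{\ \delta\ }\RR\times\Qp\xrightarrow{\ \pi\ }\Sp\to 1$: both actions are translations by the closed discrete subgroups $\delta(\ZZ[1/p])$ and $\delta(\ZZ[1/p])\alpha^{-1}$, which yields freeness and properness in one stroke, and the fibres of $\mu$ and $\epsilon$ coincide with the orbits precisely because $\ker\pi=\delta(\ZZ[1/p])$. The paper instead embeds everything into the full solenoidal groupoid $\Sol_\alpha=(\RR\times\Qp)\ltimes_\alpha\Sp^2$, takes the transversals $V=\{(1,y)\}$ and $H=\{(x,1)\}$, invokes the general reduction principle of Example 1.3(3) to conclude that $(\Sol_\alpha)_V^H$ is an equivalence between $(\Sol_\alpha)_V^V$ and $(\Sol_\alpha)_H^H$, and then transports the structure through the explicit isomorphisms $v$, $h$, $p_0$ identifying these with $S_\alpha$, $S_{\alpha^{-1}}$ and $\RR\times\Qp$. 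The paper's route buys conceptual economy and uniformity: the same two-transversal picture is exactly what handles a general $M\in SL_2(\ZZ[1/p])$ in the next subsection, and it explains \emph{where} the formulas for $\mu$, $\epsilon$ and the actions come from, in the spirit of the Connes--Rieffel Kronecker-foliation argument. Your route buys self-containedness: the reduction principle as stated requires $V$ and $H$ to meet every $\Sol_\alpha$-orbit and the restricted range and source maps to remain open (the transversals here are closed, not open), hypotheses the paper does not spell out, whereas your direct check of freeness, properness and the orbit identifications makes all of this explicit. Two of your side remarks are worth keeping: that invertibility of $\alpha$ in the ring $\RR\times\Qp$ is a genuine additional hypothesis not implied by $\pi(\alpha)\neq 1$, and that the compatibility condition $z=\pi((q+n)\alpha)^{-1}$ matches $\mu(q)$ with the \emph{range} $\pi(n\alpha)z$ of $(n,z)$ rather than its source --- a convention consistent with the paper's formulas but opposite to the most common one, so it deserves the explicit flag you give it.
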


\begin{proof}
Let $\Sol_\alpha = (\RR\times\Qp)\ltimes_{\alpha}\Sp^2$ be the full solenoidal groupoid defined by the $\RR\times\Qp$-action 
$$q\cdot (x,y) = (\pi(q)x, \pi(q\alpha)y)$$

The horizontal and vertical subspaces $H=\{(x,1), x\in \Sp\}$, $V=\{(1,y), y\in \Sp\}$ are closed transversals of $\Sp^2$ and $(\Sol_\alpha)_V^H = \{(q,1,y), q\in \RR\times\Qp, y = \pi(q\alpha)^{-1}\}$ is an equivalence bibundle between the reduced groupoids $(\Sol_\alpha)_V^V$ and $(\Sol_\alpha)_H^H$ as in \ref{transversals}. 

Moreover, there are groupoid isomorphims (straightforward verification)
$$(\Sol_\alpha)_V^V \simeq_v S(\alpha)$$
given by $v(q,1,y) = (q,y)$,
$$(\Sol_\alpha)_H^H \simeq_h S(\alpha^{-1})$$
given by 
$h(q,x,1) = (q\alpha, x)$, and 
$$(\Sol_\alpha)_V^H \simeq_{p_0} \RR\times\Qp $$
given by 
$p_0(q,1,y=\pi(q\alpha)^{-1}) = q$.


 Thus $(\Sol_\alpha)_V^H$ is a $S_\alpha$-$S_{\alpha^{-1}}$ equivalence bibundle :

\begin{equation}
\xymatrix{
  *+[l]{(\Sol_\alpha)_V^V \simeq_v S_\alpha } \ar@<.5ex>[d]\ar@<-.5ex>[d] & (\Sol_\alpha)_V^H \ar@{->}[ld]_{\tilde{\mu}} \ar[rd]^{\tilde{\epsilon}} & *+[r]{S_{\alpha^{-1}} \simeq_{h^{-1}} (\Sol_\alpha)_H^H }  \ar@<.5ex>[d]\ar@<-.5ex>[d] \\
  \Sp & & \Sp
}
\end{equation}
with structural maps coming from the groupoid structure of $\Sol_\alpha$ and thus explicitly given by:
\begin{itemize}
\item The left and right moment maps are $\tilde{\mu}(q,1,y) = y = \pi(q\alpha)^{-1}$ and $\tilde{\epsilon}(q,1,y) = \pi(q)$.

\item The left action is $(n,z)\cdot (q,1,y) = v(n,1,z)\cdot (q,1,y) = (n+q, 1, z)$, given $z = \pi((q+n)\alpha)^{-1}$

\item The right action is $(q,1,y) \cdot (n,z) = (q,1,y) \cdot h(n\alpha^{-1},z,1) = (q + n\alpha^{-1}, 1, y)$.
\end{itemize}

Now, by composition with the isomorphisms $v,h, p_0$ above we get the desired $S_\alpha - S_{\alpha^{-1}}$ bibundle structure on $P_{\alpha,\alpha^{-1}}$, indeed:
\begin{itemize}
\item $\mu(q) = \tilde{\mu}\circ p_0^{-1}(q) = \tilde{\mu}(q,1, \pi(q\alpha)^{-1}) = \pi(q\alpha)^{-1}$,

\item $\epsilon(q) = \tilde{\epsilon}\circ p_0^{-1}(q) = \tilde{\epsilon}(q,1,\pi(q\alpha)^{-1}) = \pi(q)$, 

\item $(n,z)\cdot q = v(n,1,z) \cdot p_0^{-1}(q) = (n+q, 1, z)$, given $z = \pi((q+n)\alpha)^{-1}$, and

\item $q\cdot (n,z) = p_0^{-1}(q) \cdot h(n\alpha^{-1},z,1) = (q + n\alpha^{-1}, 1, y)$, given $z = \pi(q)$.
\end{itemize}

\end{proof}

\subsection{$S_\alpha$-$S_\beta$ equivalence bibundles $P_M$ for $M\in SL_2(\ZZ[1/p])$ }

Let $M = 
  \begin{pmatrix}
    a & b  \\
    c & d 
  \end{pmatrix}$
  in $SL_2(\ZZ[1/p])$ where we assume $c\neq 0$ (general case described below). Let  
\begin{equation}
\beta = M^{-1}\cdot\alpha = -\frac{b-d\alpha}{a-c\alpha}.
\end{equation}

Denote $P_M$ the bibundle 

\begin{equation}
\xymatrix{
  *+[l]{\ZZ[1/p]\ltimes_\alpha \Sp } \ar@<.5ex>[d]\ar@<-.5ex>[d] & \RR \times \Qp \ar@{->}[ld]_\mu \ar[rd]^\epsilon & *+[r]{\ZZ[1/p]\ltimes_\beta \Sp}  \ar@<.5ex>[d]\ar@<-.5ex>[d] \\
  \Sp & & \Sp
}
\end{equation}

with structural maps given by:
\begin{itemize}
\item Left and right moment maps are $\mu(q) = \pi(q(a-c\alpha)/c)$ and $\epsilon(q) = \pi(q/c)$,

\item Left action is $(n,z)\cdot q = n+q$, given $z = \pi((q+n)(a-c\alpha)/c)$

\item Right action is $q \cdot (n,z) = q + n(a-c\alpha)^{-1}$, given $z=\pi(q/c)$.
\end{itemize}

We have the following theorem.

\begin{theorem}\label{maintheorem}

$P_M$ is an equivalence bibundle between $S_\alpha$ and $S_\beta$ 

\end{theorem}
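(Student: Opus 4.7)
The plan is to generalize the proof of the previous proposition by realizing $P_M$ as coming from the full solenoidal groupoid $\Sol_\alpha=(\RR\times\QQ_p)\ltimes_\alpha\Sp^2$ via restriction to two well-chosen transversals in $\Sp^2$. The vertical transversal $V=\{(1,y):y\in\Sp\}$ is kept from the previous case, giving the same isomorphism $v:(\Sol_\alpha)_V^V\simeq S_\alpha$. For the second transversal the natural $M$-dependent candidate is
\[
H_M = \{(\pi(cs),\pi(as)) : s\in\RR\times\QQ_p\}\subset \Sp^2.
\]
The Bezout identity $ad-bc=1$, which gives $a\cdot d+c\cdot(-b)=1$ in $\ZZ[1/p]$ and hence coprimality of $a$ and $c$, is what makes $s\mapsto(\pi(cs),\pi(as))$ factor through a homeomorphism $\Sp\simeq H_M$ (its kernel is exactly $\delta(\ZZ[1/p])$).

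After checking that $V$ and $H_M$ are complete transversals for the $(\RR\times\QQ_p)$-action on $\Sp^2$ and that the restrictions of $s,r$ to $(\Sol_\alpha)_V^{H_M}$ are open maps, Example 1.3 of the preliminaries (reduced groupoids with open maps) provides $(\Sol_\alpha)_V^{H_M}$ as an equivalence bibundle between $(\Sol_\alpha)_V^V\simeq S_\alpha$ and $(\Sol_\alpha)_{H_M}^{H_M}$. The core new step is then to produce an explicit isomorphism $h_M:(\Sol_\alpha)_{H_M}^{H_M}\simeq S_\beta$. Writing an arrow $(q,(x_0,y_0))\in(\Sol_\alpha)_{H_M}^{H_M}$ as $(x_0,y_0)=(\pi(cs_0),\pi(as_0))$ and $q\cdot(x_0,y_0)=(\pi(cs_1),\pi(as_1))$, the two $H_M$-conditions translate into $q+cs_0-cs_1, q\alpha+as_0-as_1\in\delta(\ZZ[1/p])$; eliminating $s_1$ and using Bezout forces $q=n/(a-c\alpha)$ for a unique $n\in\ZZ[1/p]$, and a direct algebraic manipulation using $ad-bc=1$ reduces the induced shift to $s_1-s_0\equiv n\beta\pmod{\delta(\ZZ[1/p])}$ (concretely, the difference $(m_1\alpha-m_2)/(a-c\alpha)-n\beta$ simplifies to $bm_1-dm_2\in\delta(\ZZ[1/p])$). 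This exhibits $h_M$ with $h_M(q,x_0,y_0)=(n,[s_0])$.

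Finally I would transport the bibundle structure from $(\Sol_\alpha)_V^{H_M}$ to $P_M=\RR\times\QQ_p$ via the projection $p_M:(q,1,y)\mapsto q$ (the analogue of $p_0$, defined on the canonical section singled out by the $H_M$-parametrization $s_0=q/c$), and then compose with $v^{-1}$ and $h_M$ exactly as in the previous proposition. Computing the transported moment maps and actions through the chain $p_M,v,h_M$ produces $\mu(q)=\pi(q(a-c\alpha)/c)$, $\epsilon(q)=\pi(q/c)$ and the prescribed formulas for the left and right actions, all the simplifications again resting on $\beta(a-c\alpha)=d\alpha-b$ and $ad-bc=1$.

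The main obstacle is the explicit construction of $h_M$: it is the step where $M$ genuinely enters through the Bezout manipulation in $\ZZ[1/p]$, converting the continuous parameter $q$ arising from the reduction of $\Sol_\alpha$ to $H_M$ into the discrete parameter $n$ of $S_\beta=\ZZ[1/p]\ltimes_\beta\Sp$ and matching the induced translation on the $H_M$-parameter with the $\beta$-twisted action on $\Sp$. Once $h_M$ is in place, freeness, properness, commutation of the actions and the moment-map homeomorphisms for $P_M$ transport directly from those of the bibundle $(\Sol_\alpha)_V^{H_M}$ supplied by Example 1.3.
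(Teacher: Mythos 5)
Your proposal follows essentially the same route as the paper: reduce the full solenoidal groupoid $\Sol_\alpha$ to the transversals $V$ and $H_M=\{(\pi(cs),\pi(as))\}$, invoke the reduced-groupoid equivalence of Example 1.3, identify $(\Sol_\alpha)_V^V$ with $S_\alpha$ and $(\Sol_\alpha)_{H_M}^{H_M}$ with $S_\beta$, and transport the bibundle structure to $\RR\times\QQ_p$ via the section map $p_0$. The only difference is that you spell out the Bezout computation behind the isomorphism $h:(\Sol_\alpha)_{H_M}^{H_M}\simeq S_\beta$ (forcing $q=n/(a-c\alpha)$ and matching the shift with $n\beta$), which the paper leaves as a ``straightforward verification''; this is a correct and welcome elaboration, not a departure.
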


\begin{proof}
Let $\Sol_\alpha = (\RR\times\Qp)\ltimes_{\alpha}\Sp^2$ be the full solenoidal groupoid.

The vertical and horizontal transversals are now $V=\{(1,y), y\in \Sp\}$ and $H_M \subset \Sp^2$ :
$$H_M = \left \{ (\pi(cq), \pi(aq)) \,,\, q \in \Qp\times\RR \right \}$$

$(\Sol_\alpha)_V^{H_M} = \{(q,1,y), q\in \RR\times\Qp, y = \pi((a-c\alpha)q/c)\}$ is an equivalence bibundle between the reduced groupoids $(\Sol_\alpha)_V^V$ and $(\Sol_\alpha)_{H_M}^{H_M}$ as in \ref{transversals}.

As for the case in the previous section, we have explicit groupoid isomorphims 
$$(\Sol_\alpha)_V^V \simeq_v S_\alpha,$$
$$(\Sol_\alpha)_{H_M}^{H_M} \simeq_h S_\beta$$ 
and 
$$(\Sol_\alpha)_V^{H_M} \simeq_{p_0} \RR\times\Qp $$ 
given respectively by 
$$v(q,1,y) = (q,y),$$ 
$$h(q,cz,az) = (q(a-c\alpha), z)$$ and 
$$p_0(q,1,y=\pi(q(a-c\alpha)/c)) = q.$$ 

Thus $(\Sol_\alpha)_V^{H_M}$ is a $S_\alpha$-$S_\beta$ equivalence bibundle :

\begin{equation}
\xymatrix{
  *+[l]{(\Sol_\alpha)_V^V \simeq_v S_\alpha } \ar@<.5ex>[d]\ar@<-.5ex>[d] & (\Sol_\alpha)_V^{H_M} \ar@{->}[ld]_{\tilde{\mu}} \ar[rd]^{\tilde{\epsilon}} & *+[r]{S_\beta \simeq_{h^{-1}} (\Sol_\alpha)_{H_M}^{H_M} }  \ar@<.5ex>[d]\ar@<-.5ex>[d] \\
  \Sp & & \Sp
}
\end{equation}

with structural maps as follows:
\begin{itemize}
\item Left and right moment maps are $\tilde{\mu}(q,1,y) = y = \pi(q(a-c\alpha)/c)$ and $\tilde{\epsilon}(q,1,y) = \pi(q/c)$

\item Left action is $(n,z)\cdot (q,1,y) = v(n,1,z)\cdot (q,1,y) = (n+q, 1, z)$, given $z = \pi((q+n)(a-c\alpha)/c)$

\item Right action is $(q,1,y) \cdot (n,z) = (q,1,y) \cdot h(n(a-c\alpha)^{-1},z,1) = (q + n(a-c\alpha)^{-1}, 1, y)$, given $z=\pi(q/c)$.
\end{itemize}


Composition with the isomorphisms $v,h, p_0$ gives the desired $S_\alpha - S_\beta$ bibundle structure on $P_M$, indeed:

\begin{itemize}
\item $\mu(q) = \tilde{\mu}\circ p_0^{-1}(q) = \tilde{\mu}(q,1, \pi(q(a-c\alpha)/c)) = \pi(q(a-c\alpha)/c)$,

\item $\epsilon(q) = \tilde{\epsilon}\circ p_0^{-1}(q) = \tilde{\epsilon}(q,1,\pi(q(a-c\alpha)/c) = \pi(q/c)$,

\item $(n,z)\cdot q = v(n,1,z) \cdot p_0^{-1}(q) = (n+q, 1, z)$, given $z = \pi((q+n)(a-c\alpha)/c)$ and

\item $q\cdot (n,z) = p_0^{-1}(q) \cdot h(n(a-c\alpha)^{-1},z,1) = (q + n(a-c\alpha)^{-1}, 1, \mu(q))$, given $z = \pi(q/c)$.
\end{itemize}
This concludes the proof.

\end{proof}

\begin{remark}
If $M = 
  \begin{pmatrix}
    a & b  \\
    c & d 
  \end{pmatrix}$
  in $SL_2(\ZZ[1/p])$ is such that $c=0$ then, with the notations above, $\beta=\epsilon \alpha+ n$ with $\epsilon \in \ZZ[1/p]^* = \pm p^\ZZ$ (i.e. invertible) and $n\in \ZZ[1/p]$. Thus $S_\beta=S_{\epsilon \alpha}$ and as we will check below, $S_{\epsilon \alpha}$ is isomorphic to $S_\alpha$.
\end{remark}

\subsection{Extension to $GL_2(\ZZ[1/p])$ is trivial up to isomorphism}

Let $\tilde{M} = 
  \begin{pmatrix}
    a & b  \\
    c & d 
  \end{pmatrix}$
  in $GL_2(\ZZ[1/p])$ with $\det(\tilde{M})=\pm p^l = \epsilon$. 

  Write $\tilde{M} = M\cdot M_\epsilon$ with $M = 
  \begin{pmatrix}
    \epsilon^{-1}a & b  \\
    \epsilon^{-1}c & d 
  \end{pmatrix}$  and  $M_\epsilon = 
  \begin{pmatrix}
    \epsilon  & 0  \\
         0  & 1 
  \end{pmatrix}$.

  A direct check shows that the map $\mu_{\epsilon}: S(\alpha) \to S(\epsilon\alpha)$ given by $\mu_{\epsilon}(n,z) = (\epsilon^{-1}n, z)$ is an isomorphism of groupoids with inverse $\mu_{\epsilon^{-1}}$ which provides the Morita equivalence $M_\epsilon\cdot\alpha \simeq \alpha$.

       Composition with the equivalence bundle $(\Sol_\alpha)_V^{H_M}$ of previous section associated with $M \in SL_2(\ZZ[1/p])$ 
       gives the Morita equivalence $\beta = \tilde{M}\cdot\alpha = M\cdot (M_\epsilon\cdot \alpha) \sim M_\epsilon\cdot\alpha \simeq \alpha$.

       Thus $M$ and $\tilde{M}$ define the same Morita equivalence class up to the isomorphism $\epsilon\alpha \simeq \alpha$ and 
       equivalence classes of noncommutative solenoids are found in the kernel of the exact sequence :
\[
  \xymatrix{
    SL_2(\ZZ[1/p]) \ar[r] &  GL_2(\ZZ[1/p]) \ar[r]^{\det} &  \ZZ[1/p]^* = \pm p^\ZZ
  }
\]

\begin{example} 
The case $\tilde{M} = \begin{pmatrix}    -1 & 0  \\ 0 & 1 \end{pmatrix}$ implies in particular that $-\alpha \simeq \alpha$.
\end{example} 

\begin{example} 
The case $\tilde{M} = \begin{pmatrix}    p^l & 0  \\ 0 & 1 \end{pmatrix}$ implies that $p^l\alpha \simeq \alpha$.
\end{example}

 \subsection{Equivalence bibundles of noncommutative torus}
 
In this section we treat, as an example of our computation methods, the case of noncommutative torus, or more precisely of groupoids behind the associated noncommutative algebras. This is perhaps a folklore and vastly addmited result, or at least for the associated $C^*$-algebras. However, in its groupoid counterpart we could not find any precise reference in the litterature, we give the result below. 
 
 Let $\theta \in \RR\setminus \QQ$.

Let $M = 
  \begin{pmatrix}
    a & b  \\
    c & d 
  \end{pmatrix}$
  in $SL_2(\mathbb{Z})$ and $\theta' = M^{-1}\cdot\theta = -\frac{b-d\theta}{a-c\theta}$ (where we assume $c\neq 0$ as for the solenoidal case). 

Denote $P_M$ the bibundle 

\begin{equation}
\xymatrix{
  *+[l]{\ZZ\ltimes_\theta S^1 } \ar@<.5ex>[d]\ar@<-.5ex>[d] & \RR \ar@{->}[ld]_\mu \ar[rd]^\epsilon & *+[r]{\ZZ\ltimes_{\theta'} S^1}  \ar@<.5ex>[d]\ar@<-.5ex>[d] \\
  S^1 & & S^1
}
\end{equation}
with:

Left and right moment maps are $\mu(t) = \pi(t(a-c\theta)/c)$ and $\epsilon(t) = \pi(t/c)$,

Left action is $(n,z)\cdot t = n+t$, given $z = \pi((t+n)(a-c\theta)/c)$

Right action is $t \cdot (n,z) = t + n(a-c\theta)^{-1}$, given $z=\pi(t/c)$.

Where in this case, $\pi:\RR\to S^1$ is the classic exponential map.

\begin{theorem}

  $P_M$ is an equivalence bibundle between $\mathbb{T}_\theta$ and $\mathbb{T}_{\theta'}$ 

\end{theorem}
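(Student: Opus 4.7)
The plan is to transcribe the proof of Theorem \ref{maintheorem} word for word, replacing $\Sp$ by $S^1$, $\RR\times\Qp$ by $\RR$, and $\ZZ[1/p]$ by $\ZZ$ throughout, so that the whole argument runs through the Kronecker foliation on the 2-torus rather than on the $p$-solenoid. Concretely, I would introduce the full torus groupoid $\Theta_\theta := \RR \ltimes_\theta (S^1)^2 \rightrightarrows (S^1)^2$ defined by $t\cdot(x,y)=(\pi(t)x,\pi(t\theta)y)$, playing the role of $\Sol_\alpha$ from the solenoidal case.

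Next I would set up the two transversals $V=\{1\}\times S^1$ and $H_M=\{(\pi(ct),\pi(at)) : t\in\RR\}$. Here $H_M$ is the closed circle embedded in $(S^1)^2$ as the subgroup of slope $a/c$; this is well-defined because $ad-bc=1$ forces $\gcd(a,c)=1$. Both $V$ and $H_M$ meet every orbit of $\Theta_\theta$ since irrationality of $\theta$ makes every orbit dense in $(S^1)^2$, and each is transverse to the orbit foliation because the tangent vectors $(1,\theta)$ and $(c,a)$ are linearly independent (equivalently, $a-c\theta\neq 0$ as $\theta\notin\QQ$). Applying example \ref{reduced}, the reduction $(\Theta_\theta)_V^{H_M}$ is then an equivalence bibundle between the reduced groupoids $(\Theta_\theta)_V^V$ and $(\Theta_\theta)_{H_M}^{H_M}$.

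The third step is to exhibit explicit groupoid isomorphisms, analogous to $v$, $h$, $p_0$ in the solenoidal proof: $v\colon (\Theta_\theta)_V^V \xrightarrow{\simeq} \mathbb{T}_\theta$ by $v(n,1,y)=(n,y)$ (noting that $\pi(t)=1$ forces $t=n\in\ZZ$); $h\colon (\Theta_\theta)_{H_M}^{H_M} \xrightarrow{\simeq} \mathbb{T}_{\theta'}$ by $h(t,\pi(cs),\pi(as))=(t(a-c\theta),s)$; and $p_0\colon (\Theta_\theta)_V^{H_M}\xrightarrow{\simeq}\RR$ by $p_0(t,1,\pi(t(a-c\theta)/c))=t$. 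Composing the tautological bibundle structure on $(\Theta_\theta)_V^{H_M}$ with $v$, $h$, $p_0$ then yields exactly the moment maps $\mu(t)=\pi(t(a-c\theta)/c)$, $\epsilon(t)=\pi(t/c)$ and the left and right actions claimed in the statement, by the same direct calculation as in the solenoidal case.

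The main obstacle I expect is the careful verification that $h$ is well-defined and is a groupoid isomorphism onto $\mathbb{T}_{\theta'}$: one must check that it does not depend on the choice of parameter $s$ representing a given point of $H_M$, and that the formula $\theta'=-(b-d\theta)/(a-c\theta)$ combined with $ad-bc=1$ is precisely what makes the product on the left-hand side match the product in $\ZZ\ltimes_{\theta'}S^1$. Once this intertwining identity is checked, the remaining verifications (openness of source and range restricted to $(\Theta_\theta)_V^{H_M}$, injectivity and surjectivity of $v$, $h$, $p_0$, commutativity of the diagrams of moment maps and actions) are strictly routine transcriptions of the corresponding arguments in the proof of Theorem \ref{maintheorem}.
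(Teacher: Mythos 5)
Your proposal follows essentially the same route as the paper: the paper's proof also introduces the Kronecker groupoid $\RR\ltimes_\theta T^2$, takes the transversals $V$ and $H_M=\{(\pi(ct),\pi(at))\}$, and identifies the reduced groupoids $(\gr_\theta)_V^V$ and $(\gr_\theta)_{H_M}^{H_M}$ with $\ZZ\ltimes_\theta S^1$ and $\ZZ\ltimes_{\theta'}S^1$, exactly as in the solenoidal Theorem \ref{maintheorem}. Your version is in fact more detailed than the paper's (which only sketches the transcription), and the points you flag --- well-definedness of $h$ and the role of $ad-bc=1$ --- are the right ones to check.
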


\begin{proof}
 The proof is similar to the case of solenoids. The groupoid to be considered is $\gr_\theta = \RR\ltimes_{\theta}T^2$ the Kronecker groupoid defined by the flow of irrational slope $\theta$ over the torus $T^2$. The $\RR$-action is $t\cdot (x,y) = (\pi(t)x, \pi(t\theta)y)$ with $\RR/\ZZ\simeq_{\pi}S^1$ the canonical quotient map.

 The vertical and horizontal submanifolds $V=\{(1,y), y\in S^1\}$ and $H_M = \left \{ (\pi(ct), \pi(at)) \,,\, t \in \RR \right \}$ are closed transversals of $T^2$ and $(\gr_\theta)_V^{H_M} = \{(t,1,y), t\in \RR, y = \pi(t\theta)^{-1}\}$ is an equivalence bibundle between the reduced groupoids $(\gr_\theta)_V^V\simeq \ZZ\ltimes_\theta S^1 $ and $(\gr_\theta)_{H_M}^{H_M} \simeq \ZZ\ltimes_{\theta'} S^1 $.

\end{proof}

\section{Equivalence bimodules of noncommutative solenoids}

\subsection{Noncommutative solenoids}

In \cite{LP1} Packer and Latremoli\`ere observed that the group $H^2(\Gamma, \mathbb{T})$ of cohomologous multipliers of a commutative group $\Gamma$ is trivial for $\Gamma= \ZZ[1/p]$ but not for $\Gamma = \ZZ[1/p]^2$ since $H^2(\Gamma, \mathbb{T})$ is then isomorphic to $\mathscr{S}_p$ as a topological group. This result hence motivates the study of twisted $(\ZZ[1/p]^2, \sigma)$ groups and their $C^*$-algebras under the name of \textsl{noncommutative solenoids}.

\begin{definition}
  A \textsl{noncommutative solenoid} is a twisted group $C^*$-algebra $C^*(\Gamma, \sigma)$ for $\sigma$ a multiplier of the group $\Gamma = \ZZ[1/p]\times\ZZ[1/p]$.  
\end{definition}
 
It is the $C^*$-completion of the involutive Banach algebra $(l^1(\Gamma), *_\sigma, *)$ with twisted convolution $*_\sigma$ given for any $f_1,f_2\in l^1(\Gamma)$ by
$$ f_1 *_\sigma f_2 \,: \gamma\in\Gamma \rightarrow \sum_{\gamma_1\in \Gamma} f_1(\gamma_1)f_2(\gamma-\gamma_1)\sigma(\gamma_1, \gamma-\gamma_1)$$

and adjoint operation 
$$ f_1^* \,: \gamma\in\Gamma \rightarrow \overline{\sigma(\gamma, -\gamma)f_1(-\gamma_1)}$$

Cohomologous multipliers $\sigma$ and $\eta$ induce $*$-isomorphic $C^*$-algebras $C^*(\Gamma,\sigma)$ and $C^*(\Gamma, \eta)$.

There is another equivalent description of $H^2(\ZZ[1/p]^2, \mathbb{T}) \simeq \mathscr{S}_p$ as the additive group 
$$\Sigma_p = \{ (a_n) : a_0\in [0, 1) \text{ and } \left ( \forall n\in \NN, \exists k\in \{0,\dots,p-1\} ,\; pa_{n+1} = a_n + k \right ) \}$$
so that each $a\in\Sigma_p$ is in bijection with the class of the multiplier $\Psi_a$: 
$$( \frac{m_1}{p^{k_1}},  \frac{m_2}{p^{k_2}},  \frac{m_3}{p^{k_3}},  \frac{m_4}{p^{k_4}} )  \to \exp \left ({2i\pi a_{k_1+k_4}m_1m_4} \right )$$
and $\Psi_a$ and $\Psi_b$ are cohomologous if and only if $a = b\in\Sigma_p$. 

\begin{nota}
   The C*-algebra $C^*(\ZZ[1/p]^2, \Psi_a)$ is denoted $\mathscr{A}_a^\mathscr{S}$ by Latremoli\`ere and Packer.
\end{nota}
   

Noncommutative solenoids are interesting examples of noncommutative spaces, related to foliations, hyperbolic dynamical systems and wavelet analysis. 
Their $K_0$ groups are exactly the abelian extensions of $\ZZ[1/p]$ by $\ZZ$ and the range of the unique trace $\tau$ on $K_0(\mathscr{A}_\alpha^\mathscr{S})$ is $\ZZ\oplus_{k\in\NN}\alpha_k\ZZ$, $K_0$ groups of noncommutative solenoids thus being characterized by a $p$-adic integer modulo an integer.


 \subsection{The groupoid $C^*$-algebra $C^*(S_\alpha)$ as a noncommutative solenoid}

 Let $a = (e^{2i\pi a_k})_{k\in\NN} \in \Sp$ and consider the universal algebra generated by two families of commutative unitaries $(\cU_k)$ and $(\cV_l)$ :

$$\cA_a^\mathscr{S} = \left < \cU_k, \cV_l , k, l \in \NN \; | \; \cU_k \cV_l = e^{2i\pi a_{k + l}} \cV_l \cU_k \;,\; \cU_{k+1}^p = \cU_k \;,\; \cV_{l+1}^p = \cV_l \right > $$ 

Latremoli\`ere and Packer showed (\cite{LP1}, proposition 3.3, \cite{Zelprod}) that $\mathscr{A}_a^\mathscr{S}$ is $*$-isomorphic to both $\cA_a^\mathscr{S}$ and to the cross-product algebra $C(\Sp)\rtimes\ZZ[1/p]$ for the action of $\ZZ[1/p]$ on $\Sp$ defined as 
$$(\frac{n}{p^l} \cdot f)(z) = f((e^{2i\pi n a_{k + l}}z_k)_{k\in\NN})$$

We now prove that $\cA_a^\mathscr{S}$ can also be realized as the groupoid $C^*$-algebra $C^*(S_\alpha)$ for any  $\alpha\in\RR\times\Qp$ such that $\pi(\alpha)=a$.

\begin{theorem}\label{thmncsolenoidgrpd}
 $C^*(S_\alpha)$ and  $\cA_{\pi(\alpha)}^\mathscr{S}$ are $*$-isomorphic algebras.
\end{theorem}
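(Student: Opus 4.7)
The plan is to reduce the statement to the already established isomorphism $\cA_a^\mathscr{S} \simeq C(\Sp)\rtimes \ZZ[1/p]$ of Latremoli\`ere--Packer (cited just before the theorem) via the standard identification of the $C^*$-algebra of a transformation groupoid with a crossed product. This splits the argument naturally into three steps, and the only real verification needed is that the action on $\Sp$ entering our groupoid $S_\alpha$ is, via $\pi$, precisely the action used in the description of $\cA_a^\mathscr{S}$.

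First, I would recall and apply the general fact that for a countable discrete group $\Gamma$ acting by homeomorphisms on a locally compact Hausdorff space $X$, the groupoid $C^*$-algebra of the action groupoid $\Gamma \ltimes X \rightrightarrows X$ is canonically $*$-isomorphic to the crossed product $C_0(X)\rtimes \Gamma$ (both full and reduced, since the group is amenable in our case as $\ZZ[1/p]$ is abelian). Applied with $\Gamma = \ZZ[1/p]$ and $X = \Sp$, this gives a canonical $*$-isomorphism
\begin{equation}
C^*(S_\alpha) \;\simeq\; C(\Sp)\rtimes_{\alpha} \ZZ[1/p],
\end{equation}
where on the right the action is the one defined by $(q\cdot f)(z) = f(\pi(-q\alpha)\, z)$ for $q\in \ZZ[1/p]$, $z\in \Sp$.

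Second, I would match this crossed product action with the one used by Latremoli\`ere and Packer. Write $a=\pi(\alpha)=(e^{2i\pi a_k})_{k\in\NN}\in \Sp$; by definition of $\pi$, $\omega$ and $\zeta$, the sequence $(a_k)$ is determined by $\alpha$ modulo $\ZZ$, and the relation $pa_{k+1} \equiv a_k\ (\mathrm{mod}\ \ZZ)$ expresses exactly that $\pi(\alpha)\in \Sp$. A direct computation from $\pi(q\alpha) = \omega(\mathrm{re}(q\alpha))\cdot \zeta(-q\alpha)$ shows that for $q = n/p^l \in \ZZ[1/p]$ the coordinate $k$ of $\pi(q\alpha)$ is $e^{2i\pi n a_{k+l}}$, so that
\begin{equation}
\bigl(\tfrac{n}{p^l}\cdot f\bigr)(z) \;=\; f\bigl((e^{2i\pi n a_{k+l}} z_k)_{k\in\NN}\bigr),
\end{equation}
up to the sign convention built into the action of $S_\alpha$, which is absorbed by the standard $*$-automorphism of the crossed product sending $u_q$ to $u_{-q}$. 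This is exactly the Latremoli\`ere--Packer action described right before the theorem.

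Third, I would conclude by invoking the Latremoli\`ere--Packer identification $\cA_a^\mathscr{S} \simeq C(\Sp)\rtimes \ZZ[1/p]$ recalled in the text, composing with the isomorphism produced in the first step to obtain the required $*$-isomorphism $C^*(S_\alpha)\simeq \cA_{\pi(\alpha)}^\mathscr{S}$. The main (and really only) obstacle is the bookkeeping in the second step: one must carefully track the conventions between $\omega$, $\zeta$, fractional parts in $\Qp$, and the sign of the action, to check that the concrete formula for the $\ZZ[1/p]$-action on $\Sp$ coming from $\alpha$ agrees on the nose with the one defining $\cA_{\pi(\alpha)}^\mathscr{S}$. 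Everything else is either a standard general fact or a direct appeal to the Latremoli\`ere--Packer theorem.
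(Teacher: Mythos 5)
Your proposal is correct in substance but follows a genuinely different route from the paper. You factor the statement as $C^*(S_\alpha)\simeq C(\Sp)\rtimes_\alpha\ZZ[1/p]\simeq \mathscr{A}_{\pi(\alpha)}^\mathscr{S}\simeq\cA_{\pi(\alpha)}^\mathscr{S}$, outsourcing the first isomorphism to the standard identification of a transformation-groupoid $C^*$-algebra with the corresponding crossed product, and the last two to the cited Latremoli\`ere--Packer result; the only genuine work left is the coordinate computation showing that the $k$-th component of $\pi\bigl(\tfrac{n}{p^l}\alpha\bigr)$ is $e^{2i\pi n a_{k+l}}$, which you correctly identify and which (using $\{nx\}\equiv n\{x\}\ \mathrm{mod}\ \ZZ$ for $n\in\ZZ$, $x\in\Qp$) does go through. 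The paper instead works entirely inside $C_c(S_\alpha)$: it exhibits explicit unitaries $U_k=\delta_{1/p^k}\otimes 1_{\Sp}$ and $V_l=\delta_0\otimes p_l$, verifies by direct convolution that they satisfy the defining relations of $\cA_a^\mathscr{S}$ (the same coordinate computation you perform appears there), and proves density of the generated subalgebra via Stone--Weierstrass, thereby producing a concrete isomorphism that matches the universal generators with specific compactly supported functions. Your argument is shorter and more conceptual, but it buys this at the cost of two black boxes and of careful convention-matching; in particular your phrase ``the standard $*$-automorphism of the crossed product sending $u_q$ to $u_{-q}$'' should be stated as an \emph{isomorphism} $C(\Sp)\rtimes_{\sigma}\ZZ[1/p]\to C(\Sp)\rtimes_{\sigma\circ(-\mathrm{id})}\ZZ[1/p]$ induced by the group automorphism $q\mapsto -q$ (equivalently, the isomorphism $S_{-\alpha}\simeq S_\alpha$), since $u_q\mapsto u_{-q}$ does not preserve the covariance relation of a single crossed product. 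The paper's explicit construction also has the side benefit of being reusable later (e.g.\ when describing the bimodules concretely), which a purely abstract chain of isomorphisms does not provide.
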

\begin{proof}
For $k,l\in\NN$ let $U_k = \delta_{1/p^k}\otimes 1_{\Sp}$ and $V_l = \delta_0\otimes p_l$ with $\delta_x$ the Dirac at $x$ and $p_l:\Sp\rightarrow S^1$ the canonical $l$-th projection. $U_k$ and $V_l$ are both functions with compact support over $S_\alpha$ that satisfy the relations $U_{k+1}^p = U_k$ and $V_{l+1}^p = V_l$.
Let $\varphi : \cA_a^\mathscr{S} \To C^*(S_\alpha)$ be the map sending universal generators to those compact support functions :
$$\varphi(\cU_k) = U_k  \text{ and } \varphi(\cV_l) = V_l$$

We first check that $(U_k)_{k\in\NN}$ and $(V_l)_{l\in\NN}$ satisfy the universal property defining $\cA_a^\mathscr{S}$.
Their compositions are computed as $U_k * V_l (n,z) = p_l(z)\delta_{1/p^k}(n)$ and $V_l * U_k (n,z) = p_l(\pi(\frac{\alpha}{p^k})\cdot z)\delta_{1/p^k}(n)$ thus $U_k V_l = p_l(\pi(\frac{\alpha}{p^k}))V_l U_k$.

Let $t\in\RR$, $r\in\Qp$ such that $\alpha = (t,r)$.

Then $\pi(\frac{\alpha}{p^k}) = (e^{2i\pi\frac{t}{p^{k+n}}})_{n\in\NN}\cdot (e^{2i\pi\{\frac{-r}{p^{k+n}}\} })_{n\in\NN} = (e^{2i\pi a_{k+n}})_{n\in\NN}$

and $p_l(\pi(\frac{\alpha}{p^k})) = e^{2i\pi a_{k+l}}$ so that 
$$U_k V_l = e^{2i\pi a_{k+l}} V_l U_k.$$

The induced map $\varphi : \cA_a^\mathscr{S} \To C^*(S_\alpha)$ is thus a $*$-monomorphism.

To show that $\varphi$ can be extended to a $*$-epimorphism we now prove that $\overline{\im{\varphi}} = C^*(S_\alpha)$.

Let $i: C(\Sp) \hookrightarrow C_c(S_\alpha)$ be the $*$-morphism $i(f) = \delta_0 \otimes f$.
A direct computation shows that $i(f)*U_k^m = \delta_{m/p^k} \otimes ( \frac{m}{p^k} \cdot f )$.
For $f\in C_c(S_\alpha)$ and $n=m/p^k\in \ZZ[1/p]$ define $f_n \in C(\Sp)$ by $f_n(z) = f(n,z)$.
$f$ has compact support so there is a finite subset $I\subset \ZZ[1/p]$ such that
$$f = \sum_{n\in I} \delta_n \otimes f_n = \sum_{n=\frac{m}{p^k}\in I} i(-m/p^k\cdot f_n)*U_k^m$$
The set $A = \left \{ i(f)*U_k^n \; | \; f \in C_c(S_\alpha), k\in \NN, n\in \ZZ \right \}$ thus generates the algebra $C_c(S_\alpha)$.
The algebra generated by the canonical projections $< p_l^m, l, n\in\NN >$ is dense in $C(\Sp)$ by the Stone-Weierstrass theorem.
The relations $i(p_l^m) = V_l^m \in A$ and $\overline{<A>} = C^*(S_\alpha)$ then implies by continuity of $i$ that  $\left < U_k^n, V_l^m \; | \; k,l\in \NN, n,m\in \ZZ \right >$ is dense in  $C^*(S_\alpha)$.

$\varphi$ is thus both a $*$-monomorphism and a $*$-epimorphism and thus a $*$-isomorphism.

\end{proof}

 \subsection{Imprimitivity bimodules and the Equivalence Theorem}

 The content of Equivalence Theorem is informally that equivalent groupoids have Morita equivalent $C^*$-algebras.
 Let $Z$ be a $(G,H)$-equivalence where $G$ and $H$ are second countable locally compact Hausdorff groupoids with respective Haar systems $\lambda=\{\lambda^u\}_{u\in G^{(0)}}$ and $\mu=\{\mu^u\}_{u\in H^{(0)}}$. For $\varphi, \psi \in C_c(Z)$ consider the following $C_c(G)$- and $C_c(H)$-valued inner products :

\begin{equation}\label{inner_left}
	  \langle \varphi, \psi \rangle(\gamma) = \int_H \varphi(\gamma \cdot \omega \cdot \eta) \overline{\psi(\omega \cdot \eta)} d\mu^{s(z)}(\eta)
\end{equation}

 \begin{equation}\label{inner_right}
  \langle \varphi, \psi \rangle_*(\eta) = \int_G \overline{\varphi(\gamma^{-1}\cdot z)} \psi(\gamma^{-1}\cdot z \cdot \eta) d\lambda^{r(z)}(\gamma)
\end{equation}

and the $C_c(G)$- and $C_c(H)$-actions on $C_c(H)$ defined by

\begin{equation}\label{action_left}
  g \cdot \varphi(z) = \int_G g(\gamma) \varphi(\gamma^{-1} \cdot z) d\lambda^{r(z)}(\gamma)
\end{equation}

\begin{equation}\label{action_right}
  \varphi \cdot h(z) = \int_H \varphi(z \cdot \eta) h(\eta^{-1}) d\mu^{s(z)}(\eta)
\end{equation}

\begin{theorem}[Equivalence Theorem 2.70 \cite{Williams}]\label{Rentheorem}
  If $Z$ is a second countable locally compact Hausdorff space which is a  $(G,H)$-equivalence, then with respect to the actions and inner products $\eqref{inner_right}, \eqref{inner_left}, \eqref{action_right}, \eqref{action_left}$ $C_c(Z)$ can be completed to a $C^*(G,\lambda) - C^*(H,\mu)$- imprimitivity bimodule $X=X_H^G$.
\end{theorem}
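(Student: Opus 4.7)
The plan is to verify that $C_c(Z)$ equipped with the given actions and inner products is a pre-imprimitivity bimodule in the sense of Rieffel, and then complete in the natural norms to obtain $X_H^G$. The verification splits into three packages: well-definedness of the operations, algebraic compatibility, and positivity/fullness; of these the positivity is the substantive point.

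First I would show that the formulas \eqref{action_left}--\eqref{action_right} and \eqref{inner_left}--\eqref{inner_right} all produce compactly supported continuous functions on the correct base. Since $G$ acts properly on $Z$, for $g\in C_c(G)$ and $\varphi\in C_c(Z)$ the integrand in $g\cdot\varphi(z)$ has compact support in $\gamma$, uniformly for $z$ in a compact set, so $g\cdot \varphi\in C_c(Z)$; the other action is symmetric. For the $G$-valued inner product, freeness and properness of the $H$-action ensure that, for $\gamma$ fixed, the map $\eta\mapsto \omega\cdot\eta$ is a homeomorphism onto the $H$-orbit of $\omega$, so \eqref{inner_left} is an integral of a compactly supported continuous function of $\eta$, giving $\langle\varphi,\psi\rangle\in C_c(G)$; likewise $\langle\varphi,\psi\rangle_*\in C_c(H)$.

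Next I would check the algebraic identities defining a pre-imprimitivity bimodule: the two actions commute, the inner products are conjugate-symmetric and sesquilinear, and the crucial compatibility
\[
\langle\varphi,\psi\rangle\cdot\xi \;=\; \varphi\cdot \langle\psi,\xi\rangle_*
\]
holds in $C_c(Z)$. Each of these follows from Fubini (justified by compact support of all integrands), the invariance of the Haar systems $\lambda$ and $\mu$, and the associativity of the groupoid compositions together with the cancellation $(\gamma\cdot\omega)\cdot\eta=\gamma\cdot(\omega\cdot\eta)$. The adjoint identity $\langle\psi,\varphi\rangle=\langle\varphi,\psi\rangle^*$ in the twisted involutive algebra $C_c(G)$ amounts to changing variable $\eta\mapsto \eta^{-1}$ inside \eqref{inner_left}.

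The main obstacle, as always for this kind of theorem, is positivity of the inner products: one must prove that $\langle\varphi,\varphi\rangle$ is positive as an element of $C^*(G,\lambda)$ (not merely pointwise positive as a function). My plan is to follow the Renault--Muhly--Williams disintegration strategy. Using the homeomorphism $Z/H\simeq G^{(0)}$, each representation $\rho$ of $C^*(G,\lambda)$ disintegrates along fibers of $r:G\to G^{(0)}$, and the freeness/properness of the $H$-action on $Z$ lets one realize $\rho(\langle\varphi,\varphi\rangle)$ as an operator of the form $T^*T$ on a direct-integral Hilbert space built from $L^2$-sections of the $H$-bundle $Z\to G^{(0)}$. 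Here one uses second countability of $Z$ to select Borel sections of the $H$-action locally and to apply standard disintegration theorems; the positivity then drops out automatically from the $T^*T$ form.

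Finally, fullness of $\langle\cdot,\cdot\rangle$ and $\langle\cdot,\cdot\rangle_*$ is proved by partitions of unity: for any $g\in C_c(G)$ supported in a small bisection one produces $\varphi,\psi\in C_c(Z)$ supported in local trivializations with $\langle\varphi,\psi\rangle=g$, and these span a dense subspace; symmetrically for $H$. Once these five axioms are in hand, Rieffel's standard completion procedure applies: the norm $\|\varphi\|=\|\langle\varphi,\varphi\rangle\|_{C^*(G,\lambda)}^{1/2}$ coincides with $\|\langle\varphi,\varphi\rangle_*\|_{C^*(H,\mu)}^{1/2}$ (a consequence of the compatibility identity and a $C^*$-algebraic argument), and completing $C_c(Z)$ in this norm yields the $C^*(G,\lambda)-C^*(H,\mu)$ imprimitivity bimodule $X=X_H^G$.
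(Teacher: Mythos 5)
This statement is not proved in the paper at all: it is quoted verbatim as Theorem 2.70 of Williams' book \cite{Williams} (the Muhly--Renault--Williams equivalence theorem), and the authors use it purely as a black box to pass from the equivalence bibundle $P_M$ to the imprimitivity bimodule $E_M$. So there is no in-paper argument to compare yours against; what you have written is an outline of the standard external proof. As an outline it identifies the right skeleton --- well-definedness of the four operations from properness, the pre-imprimitivity-bimodule axioms via Fubini and invariance of the Haar systems, positivity, fullness, and Rieffel's completion together with the equality of the two norms --- and those parts are essentially correct in spirit.

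The gap is that the two load-bearing steps are only gestured at, and the mechanisms you propose for them are not quite the ones that work. For positivity, saying that $\rho(\langle\varphi,\varphi\rangle)$ ``is realized as $T^*T$'' presupposes Renault's disintegration theorem (every representation of $C^*(G,\lambda)$ is equivalent to the integrated form of a unitary representation of $G$ on a Borel Hilbert bundle over $G^{(0)}$), which is itself a major theorem and is exactly where the second countability hypothesis is consumed; constructing the intertwiner $T$ from a Borel transversal of the free and proper $H$-action is the bulk of the proof in both \cite{Williams} and Muhly--Renault--Williams, not an automatic consequence. For fullness, your plan to realize an arbitrary $g\in C_c(G)$ supported ``in a small bisection'' fails for general locally compact groupoids with Haar systems: unless $G$ is \'etale, $C_c(G)$ is not spanned by functions supported in bisections. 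The standard argument instead shows that the span of the inner products is dense in $C_c(G)$ in the inductive limit topology, using openness and surjectivity of the moment map $Z\to G^{(0)}$ and an approximate-identity construction. Neither objection makes your strategy unsalvageable, but as written the proposal cites the hard parts rather than proving them --- which, to be fair, is also what the paper does.
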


The equivalence theorem for reduced algebra states that $C_c(Z)$ can also be completed to a $C_r^*(G,\lambda) - C_r^*(H,\mu)$- imprimitivity bimodule $X_r$.
In that case the identity map on $C_c(Z)$ extends to a surjective bimodule map of $X$ onto $X_r$ which kernel $Y$ is an imprimitivity bimodule. Moreover $X_r$ is then isomorphic to the quotient imprimitivity bimodule $X/Y$.

\begin{theorem}
Let $M = 
  \begin{pmatrix}
    a & b  \\
    c & d 
  \end{pmatrix}$
  in $GL_2(\ZZ[1/p])$, $\alpha\in \Sp\setminus\{1\}$ and $\beta = M^{-1}\cdot\alpha = -\frac{b-d\alpha}{a-c\alpha}$. 

$C_c(\RR\times\Qp)$ can be completed to a $C^*(S_\alpha)$ - $C^*(S_\beta)$ imprimitivity bimodule $E_M$ for which the algebras $\mathscr{A}_\alpha^\mathscr{S}$ and $\mathscr{A}_\beta^\mathscr{S}$ are Morita equivalent.
\end{theorem}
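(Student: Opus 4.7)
The strategy is to compose the three principal tools already established in the paper: the equivalence bibundle of Theorem \ref{maintheorem}, Renault's Equivalence Theorem (Theorem \ref{Rentheorem}), and the identification $C^*(S_\alpha)\simeq \mathscr{A}_\alpha^\mathscr{S}$ of Theorem \ref{thmncsolenoidgrpd}.

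First I would reduce to the case $M\in SL_2(\ZZ[1/p])$ with $c\neq 0$. If $\det M = \epsilon = \pm p^l\neq 1$, one factors $M = M'\cdot M_\epsilon$ with $M'\in SL_2(\ZZ[1/p])$ as in the previous subsection and uses the groupoid isomorphism $\mu_\epsilon:S_\alpha\simeq S_{\epsilon\alpha}$ recalled there, which induces a $*$-isomorphism $C^*(S_\alpha)\simeq C^*(S_{\epsilon\alpha})$; composing the trivial imprimitivity bimodule coming from $\mu_\epsilon$ with the bimodule for $M'$ produces the bimodule for $M$. The degenerate case $c=0$ is settled by the remark following Theorem \ref{maintheorem}: then $\beta = \epsilon\alpha + n$ with $\epsilon\in\ZZ[1/p]^*$ and $n\in\ZZ[1/p]$, so $S_\beta\simeq S_\alpha$ and the conclusion is trivial.

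In the main case $M\in SL_2(\ZZ[1/p])$ with $c\neq 0$, Theorem \ref{maintheorem} supplies the equivalence bibundle $P_M$ on the space $\RR\times\Qp$, with the explicit moment maps $\mu(q)=\pi(q(a-c\alpha)/c)$, $\epsilon(q)=\pi(q/c)$ and explicit left and right actions of $S_\alpha$ and $S_\beta$. Both groupoids are second-countable locally compact Hausdorff, since $\Sp\subset\mathbb{T}^\NN$ is compact metrisable and $\ZZ[1/p]$ is countable. Equipping $S_\alpha$ and $S_\beta$ with their canonical Haar systems (counting measure on the $\ZZ[1/p]$-fibres together with the translation-invariant probability measure on $\Sp$) and $\RR\times\Qp$ with its Haar measure, Renault's Equivalence Theorem then produces the sought imprimitivity bimodule $E_M$ as the completion of $C_c(\RR\times\Qp)$ for the inner products and actions \ref{inner_left}--\ref{action_right}, into which one substitutes the explicit data of $P_M$ coming from Theorem \ref{maintheorem}.

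Finally, Theorem \ref{thmncsolenoidgrpd} provides $*$-isomorphisms $C^*(S_\alpha)\simeq \mathscr{A}_\alpha^\mathscr{S}$ and $C^*(S_\beta)\simeq \mathscr{A}_\beta^\mathscr{S}$, so transporting $E_M$ through these isomorphisms yields the claimed Morita equivalence between the two noncommutative solenoids. Since the argument is essentially an assembly of already-proved facts, there is no conceptual obstacle; the one place that deserves care is matching the orientation and inverse conventions when unwinding the abstract formulas \ref{inner_left}--\ref{action_right} into concrete integral expressions on $C_c(\RR\times\Qp)$ using the explicit moment maps and bibundle actions of $P_M$, and verifying that the chosen Haar systems are compatible with those actions in the sense required by \cite{Williams}.
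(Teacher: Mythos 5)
Your proposal is correct and follows essentially the same route as the paper: the paper's own proof simply notes that $S_\alpha$ and $S_\beta$ are second countable locally compact Hausdorff groupoids and applies the Equivalence Theorem \ref{Rentheorem} to the bibundle $P_M$ of Theorem \ref{maintheorem}. Your additional care with the reduction from $GL_2(\ZZ[1/p])$ to $SL_2(\ZZ[1/p])$ via $M_\epsilon$, the degenerate case $c=0$, and the explicit invocation of Theorem \ref{thmncsolenoidgrpd} merely spells out steps the paper leaves implicit.
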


\begin{proof}
  $S_\alpha$ and $S_\beta$ are second countable locally compact Hausdorff groupoids.
The equivalence theorem is applied to the $(S_\alpha, S_\beta)$ equivalence bibundle $P_M$.  
\end{proof}

\bibliographystyle{plain}
\bibliography{bibliographie}

\end{document}